\newtheorem{theo}{Theorem}[section]
\newtheorem{coro}[theo]{Corollary}
\theoremstyle{remark}
\newtheorem{remark}{\bf Remark}
\begin{document}

\title{Uniqueness of Generalized Fermat Groups in positive characteristic}

\author[R. A. Hidalgo]{Rub\'en A. Hidalgo}
\address{Departamento de Matem\'atica y Estad\'istica, Universidad de la Frontera\\ Casilla 54-D, Temuco \\Chile}
\email{ruben.hidalgo@ufrontera.cl}

\author[H. F. Hughes]{Henry F. Hughes}
\address{Instituto de Ciencias Físicas y Matemáticas, Facultad de Ciencias, Universidad Austral de Chile\\  Valdivia\\ Chile. Departamento de Matem\'atica y Estad\'istica, Universidad de la Frontera\\ Casilla 54-D, Temuco \\Chile}
\email{henry.hughes@uach.cl}

\author[M. Leyton-\'Alvarez]{Maximiliano Leyton-\'Alvarez}
\address{Instituto de Matem\'atica y F\'isica, Universidad de Talca\\
3460000 Talca, Chile}
\email{leyton@inst-mat.utalca.cl}

\thanks{Partially supported by the projects Fondecyt 1230001, 1220261, and 1221535}

\subjclass[2010]{14J50; 32Q40; 53C15}

\keywords{}


\begin{abstract}
Let $X\subset {\mathbb P}_{K}^{m}$ be a smooth irreducible projective algebraic variety of dimension $d$, defined over an algebraically closed field $K$ of characteristic $p>0$.
We say that $X$ is a generalized Fermat variety of type $(d;k,n)$, where $n \geq d+1$ and $k \geq 2$ is relatively prime to $p$, if there is a Galois branched covering $\pi\colon X\to {\mathbb P}_{K}^{d}$, with deck group ${\mathbb Z}_k^n\cong H<\rm{Aut}(X)$, whose branch divisor consists of $n+1$ hyperplanes in general position (each one of branch order $k$). In this case, the group $H$ is called a generalized Fermat group of type $(d;k,n)$. We prove that, if $k-1$ is not a power of $p$ and either (i) $p=2$ or (ii) $p>2$ and $(d;k,n) \notin \{(2;2,5), (2;4,3)\}$, then a generalized Fermat variety of type $(d;k,n)$ has a unique generalized Fermat group of that type.

\end{abstract}

\maketitle

\section{Introduction}
Let $X\subset {\mathbb P}_{K}^{m}$ be a smooth irreducible projective algebraic variety of dimension $d \geq 1$, defined over an algebraically closed field $K$ of characteristic $p>0$. Let $\rm{Aut}(X)$ be its group of all regular automorphisms and $\rm{Lin}(X)$ be its group of linear automorphisms (that is, its automorphisms obtained as the restriction of a projective linear transformation of ${\mathbb P}_{K}^{m}$).

Let $n \geq d+1$ and $k \geq 2$ be integers such that $k$ is relatively prime to $p$.
A group  ${\mathbb Z}_k^n\cong H<\rm{Aut}(X)$ is called a generalized Fermat group of type $(d;k,n)$ if there 
is an abelian Galois branched covering $\pi\colon X\to {\mathbb P}_{K}^{d}$, with deck group $H$, whose branch divisor consists of $n+1$ hyperplanes in general position (each one of branch order $k$).
In this case, we say that $X$ (respectively, that $(X,H)$) is a generalized Fermat variety (respectively, generalized Fermat pair) of type $(d;k,n)$.

In \cite{HKLP16}, it was observed that every generalized Fermat variety of type $(1;k,n)$ (also called a generalized Fermat curve of type $(k,n)$) has a unique generalized Fermat group of type $(1;k,n)$ under the assumption that $(k-1)(n-1)>2$ and $k-1$ is not a power of $p$.
This paper shows that a similar uniqueness result holds for $d \geq 2$.

\begin{theo}\label{maintheo}
Let $d \geq 2$, $n \geq d+1$ and $k \geq 2$ be relatively prime to $p$ such that $k-1$ is not a power of $p$. If either (i) $p=2$ or (ii) $p>2$ and $(d;k,n) \notin \{(2;2,5), (2;4,3)\}$, then a generalized Fermat variety of type $(d;k,n)$ has a unique generalized Fermat group of that type.
\end{theo}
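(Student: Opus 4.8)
The plan is to reduce everything to a question about linear automorphisms and then to a rigidity statement for configurations of reflection hyperplanes. First I would fix the explicit model, realizing $X$ as a smooth complete intersection $V(F_1,\dots,F_{n-d})\subset\mathbb{P}^n_K$ of $n-d$ forms of Fermat type $F_j=\sum_{i=0}^n c_{ji}x_i^{k}$, with $H\cong\mathbb{Z}_k^{n+1}/\Delta$ acting by $x_i\mapsto\zeta^{e_i}x_i$ and $\pi\colon X\to\mathbb{P}^d$ given by $[x_i]\mapsto[x_i^{k}]$, the $n+1$ branch hyperplanes being the images of the coordinate hyperplanes. The first key step is that the polarization $\mathcal{O}_X(1)$ is intrinsic: for $d\ge 3$ it is the ample generator of $\mathrm{Pic}(X)\cong\mathbb{Z}$ by Grothendieck--Lefschetz, while for $d=2$ it is the unique $|m|$-th root of $\pm K_X=\mathcal{O}((n-d)k-n-1)=\mathcal{O}(m)$ in the torsion-free group $\mathrm{Pic}(X)$. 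Hence every automorphism preserves $\mathcal{O}_X(1)$ and $\mathrm{Aut}(X)=\mathrm{Lin}(X)\hookrightarrow\mathrm{PGL}_{n+1}(K)$. This step breaks down precisely for $m=0$, i.e. for the two type-$(2;k,n)$ cases with trivial canonical bundle; a direct computation gives $m=0$ exactly for $(2;4,3)$ and $(2;2,5)$, the K3 surfaces, whose automorphism groups need not be linear, which is why those pairs are excluded. When $p=2$ both pairs have $k$ even, so they are already discarded by $\gcd(k,p)=1$, explaining why case (i) needs no exclusion.

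Next I would identify the intrinsic reflections. For $d\ge 2$, any $g\in\mathrm{Aut}(X)$ of finite order whose fixed locus contains a divisor $D$ must be a homology: a short analysis of the eigenspace decomposition of $g$ shows that on a nondegenerate $X$ of dimension $\ge 2$ the fixed locus has a $(d-1)$-dimensional component only when $g$ has an eigenvalue of multiplicity $n$, so $D$ spans a hyperplane $\mathcal{H}\subset\mathbb{P}^n$ fixed pointwise and $g=\mathrm{diag}(\zeta,1,\dots,1)$ in suitable coordinates. This is exactly the point that fails for $d=1$, where the fixed locus is finite, which is why the curve case of \cite{HKLP16} is genuinely different. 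The standard generators $a_i$ are order-$k$ homologies whose axes are the coordinate hyperplanes $\{x_i=0\}$, and they generate $H$; thus recovering $H$ reduces to recovering its configuration of $n+1$ axis hyperplanes, since those determine the coordinate simplex and hence each cyclic factor $\langle a_i\rangle$.

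The heart of the argument, and the step I expect to be the main obstacle, is to determine all order-$k$ homologies of $\mathbb{P}^n$ that preserve $X$ and to show that the admissible ones form a single configuration. Writing a candidate homology as $\mathrm{diag}(\zeta,1,\dots,1)$ with respect to an \emph{arbitrary} basis and imposing that it stabilizes the linear span $W=\langle F_1,\dots,F_{n-d}\rangle\subset H^0(\mathbb{P}^n,\mathcal{O}(k))$ turns this into a concrete invariant-theory problem over $K$. The decisive phenomenon is characteristic-$p$ additivity: a homology whose axis is not a coordinate hyperplane can preserve the Fermat-type forms only when $x\mapsto x^{k-1}$ is additive, i.e. when $k-1=p^{s}$, which is exactly the Hermitian situation producing the extra reflections of $\mathrm{PGU}_{n+1}$. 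Ruling out $k-1=p^{s}$ therefore forces every order-$k$ homology preserving $X$ to have a coordinate hyperplane as axis. In the remaining low-degree (Fano) cases extra homologies survive, and there the general-position hypothesis on the $n+1$ branch hyperplanes is what isolates the correct configuration, since only the coordinate simplex maps to $n+1$ hyperplanes in general position in $\mathbb{P}^d$. Making this dichotomy precise and uniform over all $(d;k,n)$ is the technical core.

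Finally I would assemble uniqueness. By the previous step the datum consisting of the order-$k$ homologies of $X$ together with the constraint that their axes descend to $n+1$ hyperplanes in general position is intrinsic to $X\subset\mathbb{P}^n$ and coincides with the standard generating reflections of \emph{every} generalized Fermat group of type $(d;k,n)$. Hence if $H_1$ and $H_2$ are two such groups, they share the same $n+1$ axis hyperplanes, therefore the same coordinate simplex and the same cyclic reflection subgroups; since these generate, $H_1=H_2$. The curve result of \cite{HKLP16} enters as an auxiliary tool: for a general line $\ell\subset\mathbb{P}^d$ the section $\Gamma=\pi^{-1}(\ell)$ is a generalized Fermat curve of type $(k,n)$ on which the homologies restrict to the curve reflections, which lets one transport the known curve rigidity into the constraints on admissible configurations, particularly in the surface Fano cases.
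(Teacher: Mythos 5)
Your scaffolding agrees with the paper's: the reduction to the explicit complete-intersection model $X_n^k(\Lambda)=V(f_0,\dots,f_{n-d-1})$, and the step $\mathrm{Aut}(X)=\mathrm{Lin}(X)$ via the (non)triviality of $\omega_X=\mathcal{O}_X(r)$, $r=(n-d)k-n-1$, with the excluded types $(2;2,5)$ and $(2;4,3)$ arising exactly from $r=0$ (the K3 cases) and being automatically discarded when $p=2$ by $\gcd(k,p)=1$ — this is precisely the paper's Corollary \ref{Corollary Kontogeorgis igualdad de automorfismos en GFV}, resting on Theorem \ref{Theo Kontogeorgis igualdad de automorfismos}. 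But there is a genuine gap at the step you yourself flag as ``the technical core'': the classification of order-$k$ homologies stabilizing the span $W=\langle f_0,\dots,f_{n-d-1}\rangle$ is \emph{asserted} (``a non-coordinate axis forces $k-1=p^s$'') but never proved, and that assertion is essentially the whole theorem. The paper fills exactly this hole, and does so more uniformly, by Kontogeorgis's gradient method applied to \emph{every} $\sigma\in\mathrm{Lin}(X_n^k(\Lambda))$, not only to homologies: from $f_i\circ\sigma\in\langle f_0,\dots,f_{n-d-1}\rangle$ one gets $\sigma^*(\nabla f_i)=\bigl(\sum_j g_{j,i}\nabla f_j\bigr)A_\sigma^{-1}$, whose coordinates are $(k-1)$-st powers of linear forms; expanding multinomially and invoking the $p$-adic digit criterion (Eisenbud), the hypothesis that $k-1$ is not a power of $p$ produces some $u\in\{1,\dots,k-2\}$ with $\binom{k-1}{u}\neq 0$ in $K$, which kills any row of $A_\sigma$ with two nonzero entries. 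In particular no ``Fano dichotomy'' ever arises: under your hypotheses there are no surviving extra homologies in low degree, so that clause of your plan is unfounded — and the repair you propose for it is circular, since selecting the configuration whose axes ``descend to $n+1$ hyperplanes in general position in $\mathbb{P}^d$'' presupposes the quotient map $\pi$, i.e.\ the very group you are trying to pin down.

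Two further points. First, even granting your dichotomy, uniqueness does not yet follow: you must also show that an order-$k$ homology whose axis is a coordinate hyperplane has the opposite coordinate vertex as center (equivalently, is diagonal, hence in $H_0$); writing $g^*x_1=\zeta x_1$, $g^*x_j=x_j+(\zeta-1)v_jx_1$ and using $\binom{k}{1}=k\neq 0$ this is a short computation, but it is absent from your outline, and your claim that a finite-order linear map with a $(d-1)$-dimensional fixed component on $X$ must be a homology needs the same nondegeneracy lemma the paper uses (a fixed linear space of projective dimension at most $n-2$ meets $X$ in dimension at most $d-2$) — there you are on equal footing with the paper's step 2, where a non-diagonal generalized permutation matrix is excluded as a standard generator for exactly this reason, after which all standard generators of a second Fermat group $H$ are diagonal, hence lie in $H_0$, forcing $H=H_0$. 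Second, the curve-restriction tool is unusable as stated: a candidate automorphism, or a second generalized Fermat group, has no reason to preserve $\pi^{-1}(\ell)$ for a general line $\ell$ (and again, $\pi$ itself depends on the group), so the rigidity of \cite{HKLP16} cannot be transported this way; the paper instead uses the curve case only as the $d=1$ instance of the same multinomial computation, and its inductive structure (Remark \ref{Fix(H_j) is a GFV}) restricts to the branch hyperplanes, which \emph{are} canonically preserved.
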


The above result, for the case of algebraically closed fields of characteristic zero, was obtained in \cite{HHL23}.

As a consequence of Pardini's classification of abelian branched covers \cite{Par91}, and that of maximal branched abelian covers \cite{AlPa13},
 it can be observed that each generalized Fermat pair $(X,H)$ of type $(d;k,n)$ is, up to isomorphisms, uniquely determined by a collection $\Sigma_1,\cdots,\Sigma_{n+1}\subset{\mathbb P}_K^d$ of hyperplanes in general position. Let us denote by $[x_{1}:\cdots:x_{d+1}]$ the points of the  projective space ${\mathbb P}_{K}^{d}$. 
 Up to the projective linear group ${\rm PGL}_{d+1}(K)$, we may assume that, for $j=1,\ldots,d+1$, the hiperplane $\Sigma_{j}$ corresponds to the hyperplane $x_{j}=0$, and that $\Sigma_{d+2}$ corresponds to the hiperplane $x_{1}+\cdots+x_{d+1}=0$. With this normalization, one can write down an explicit algebraic model $X^{k}_{n}(\Lambda) \subset {\mathbb P}_{K}^{n}$ for $X$ (as a certain fiber product of $(n-d)$ Fermat hypersurfaces of degree $k$ and dimension $d$), where $\Lambda$ belongs to certain parameter space $X_{n,d} \subset K^{d(n-d-1)}$ (see Section \ref{Section Algebraic Model}). As a consequence of the results in \cite{Kon02,MaMo64}, we may observe, under the hypothesis of Theorem \ref{maintheo}, that ${\rm Aut}(X^{k}_{n}(\Lambda))={\rm Lin}(X^{k}_{n}(\Lambda))$  (Corollary \ref{Corollary Kontogeorgis igualdad de automorfismos en GFV}). The following results state the form of these automorphisms.

\begin{theo}\label{maintheo2}
Let $d \geq 2$, $n \geq d+1$ and $k \geq 2$ be relatively prime to $p$ such that $k-1$ is not a power of $p$. If either (i) $p=2$ or (ii) $p>2$ and $(d;k,n) \notin \{(2;2,5), (2;4,3)\}$, then 
${\rm Aut}(X^{k}_{n}(\Lambda))={\rm Lin}(X^{k}_{n}(\Lambda))$ consists of matrices such that only an element in each row and column is non-zero.
\end{theo}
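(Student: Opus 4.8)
The plan is to prove that every automorphism of $X^{k}_{n}(\Lambda)$ is linear of the claimed monomial form. By the cited result (Corollary \ref{Corollary Kontogeorgis igualdad de automorfismos en GFV}), under the stated hypotheses we already know ${\rm Aut}(X^{k}_{n}(\Lambda))={\rm Lin}(X^{k}_{n}(\Lambda))$, so it suffices to analyze which \emph{linear} automorphisms of $\mathbb{P}^n_K$ preserve the variety $X^{k}_{n}(\Lambda)$, and to show they must be monomial (exactly one nonzero entry per row and column). My starting point would be the explicit fiber-product model: $X^{k}_{n}(\Lambda)$ is cut out in $\mathbb{P}^n_K$ by the $(n-d)$ Fermat-type equations that relate the coordinates $y_1,\dots,y_{n+1}$ via $k$-th powers, with the $\Lambda$-parameters encoding the positions of the hyperplanes $\Sigma_{d+3},\dots,\Sigma_{n+1}$.

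The key geometric object I would exploit is the branch locus and its intrinsic structure. First I would observe that the generalized Fermat group $H\cong\mathbb{Z}_k^n$ is normalized by $\rm{Aut}(X)$ (if uniqueness of $H$ from Theorem \ref{maintheo} is invoked, $H$ is even characteristic), so any automorphism $\varphi$ descends to an automorphism $\bar\varphi$ of the quotient $X/H\cong\mathbb{P}^d_K$ that permutes the $n+1$ branch hyperplanes $\Sigma_1,\dots,\Sigma_{n+1}$. Because these hyperplanes are in general position, $\bar\varphi$ is an element of ${\rm PGL}_{d+1}(K)$ permuting a configuration of $n+1\geq d+2$ points in general position in the dual projective space; such a projective transformation is itself monomial with respect to the coordinate frame once we account for the normalization fixing $d+2$ of the hyperplanes. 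The next step is to lift this combinatorial/permutation data back upstairs: the coordinate functions $y_j$ on $X^{k}_{n}(\Lambda)$ are (up to $H$-action) the $k$-th roots of the linear forms defining the $\Sigma_j$, so a linear automorphism preserving $X$ must send each $y_j$ to a scalar multiple of some $y_{\sigma(j)}$, where $\sigma$ is the induced permutation of the branch hyperplanes. This is precisely the statement that the matrix of $\varphi$ has a single nonzero entry in each row and column.

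The main obstacle, and where the real work lies, is ruling out non-monomial linear automorphisms that could in principle arise from \emph{coincidences} in the eigenvalue or weight structure of the $H$-action. Concretely, the coordinate lines are the $H$-eigenspaces for the $n$ independent characters of $\mathbb{Z}_k^n$; a linear automorphism normalizing $H$ permutes these character spaces, which forces the monomial form \emph{provided} the eigenspaces are one-dimensional and the induced action on characters is well-defined. The delicacy enters when distinct characters could be conflated, which is exactly the phenomenon controlled by the hypothesis that $k-1$ is not a power of $p$ together with the exclusion of the sporadic pairs $(2;2,5)$ and $(2;4,3)$ in characteristic $p>2$. I would therefore carry out a careful case analysis showing that, away from these exceptional types, the character lattice of $H$ admits no extra symmetries beyond coordinate permutations, so the normalizer of $H$ in ${\rm GL}_{n+1}(K)$ modulo scalars is contained in the group of monomial matrices.

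To organize the argument, I would proceed as follows: (1) recall the equations of $X^{k}_{n}(\Lambda)$ and the explicit action of $H$ on the homogeneous coordinates, identifying the $n+1$ coordinate hyperplanes as the ramification divisor; (2) show that any $\varphi\in{\rm Lin}(X^{k}_{n}(\Lambda))$ normalizes $H$ (this follows from $H$ being characteristic under Theorem \ref{maintheo}, or directly from the intrinsic description of the ramification); (3) deduce that $\varphi$ permutes the $n+1$ ramification hyperplanes and hence, via the general-position hypothesis, realizes a permutation $\sigma\in S_{n+1}$; and (4) conclude, from the eigenspace decomposition under $H$ and the exclusion of the exceptional types, that $\varphi$ acts as a scalar on each coordinate up to the permutation $\sigma$, yielding the monomial form. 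The bookkeeping in step (4), verifying that the weight multiplicities force one-dimensional eigenspaces precisely outside the sporadic cases, is the technical heart and the step most likely to require the detailed hypotheses on $k$, $p$, $d$, and $n$.
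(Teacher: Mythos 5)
There is a genuine gap, and it is structural: your argument is circular. Your step (2) — that every $\varphi\in{\rm Lin}(X_n^k(\Lambda))$ normalizes $H$ — is justified by invoking Theorem \ref{maintheo} (uniqueness, hence characteristicity, of the generalized Fermat group). But in the paper the logical order is the reverse: Theorem \ref{maintheo2} is proved \emph{first}, by a direct computation showing every linear automorphism is monomial, and only \emph{then} is uniqueness of $H_0$ deduced (a hypothetical second standard generator with a non-diagonal matrix would, by the monomial form, have fixed locus of dimension at most $d-2$ in $X_n^k(\Lambda)$, contradicting that standard generators fix a divisor of dimension $d-1$). So you may not assume Theorem \ref{maintheo} here. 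Your fallback — that normalization follows ``directly from the intrinsic description of the ramification'' — is not a proof: a priori the covering $\pi$ and its branch configuration depend on the choice of generalized Fermat group inside ${\rm Aut}(X)$, and showing this data is intrinsic to $X$ is essentially equivalent to the uniqueness you would be assuming. Without normalization of $H$, the descent to ${\mathbb P}^d_K$, the permutation $\sigma$ of the branch hyperplanes, and the eigenspace bookkeeping of your step (4) never get started.

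You also misplace where the hypotheses act. In the paper's proof, the condition that $k-1$ is not a power of $p$ enters through Lucas' theorem: it guarantees some $u\in\{1,\cdots,k-2\}$ with $\binom{k-1}{u}\neq 0$ in $K$. The actual mechanism is Kontogeorgis' gradient method: for $\sigma\in{\rm Lin}(X_n^k(\Lambda))$ one has $f_i\circ\sigma=\sum_j g_{j,i}f_j$, hence $\sigma^*(\nabla f_i)=\bigl(\sum_j g_{j,i}\nabla f_j\bigr)A_\sigma^{-1}$, so each $\bigl(\sigma^*(x_\ell)\bigr)^{k-1}$ can only involve the pure monomials $x_i^{k-1}$; if some row had two nonzero entries $a_{j,l_1}\neq 0\neq a_{j,l_2}$, the cross-term with exponents $(u,k-1-u)$ and nonvanishing multinomial coefficient forces $a_{j,l_1}^{u}a_{j,l_2}^{k-1-u}=0$, a contradiction. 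This computation needs neither normalization of $H$ nor any analysis of the character lattice, and it is where ``$k-1$ not a power of $p$'' is consumed — not, as you suggest, in ruling out ``conflated characters.'' Similarly, the exclusions $(2;2,5)$ and $(2;4,3)$ play no role in the monomial-form analysis of ${\rm Lin}$; they are needed only for the equality ${\rm Aut}={\rm Lin}$ (Corollary \ref{Corollary Kontogeorgis igualdad de automorfismos en GFV}), these being exactly the types where $\omega_X$ is trivial and ${\rm Aut}$ is infinite. To repair your proposal you would need an independent proof that ${\rm Lin}(X_n^k(\Lambda))$ normalizes $H_0$, which is precisely the hard content the paper's computation replaces.
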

The method used to prove that ${\rm Lin}(X^{k}_{n}(\Lambda))$ consists of matrices such that only an element in each row and column is non-zero (see Section \ref{Section demostraciones de los teoremas principales}), was introduced in \cite{Kon02} to compute the automorphism group of Fermat-like varieties, it has also been used in \cite{Poo05} to exhibit explicit equations for hypersurfaces with trivial automorphism group, in \cite{OY19} to compute automorphism groups of quintic threefolds, and in \cite{GLMV23} to compute the automorphism groups of most Fermat, Delsarte and Klein hypersurfaces.

\begin{remark}[A connection to the field of moduli]
Fields of moduli were first introduced by Matsusaka \cite{Matsusaka} and later by Shimura \cite{Shimura} (for the case of polarized abelian varieties). In \cite{Koizumi}, Koizumi gave a more general definition for general algebraic varieties (even with extra structures). 
Let us consider an algebraic model $X^{k}_{n}(\Lambda)$, where $\Lambda \in X_{n,d}$. If $\sigma \in {\rm Aut}(K)$ (the group of the field automorphisms of $K$), then $(X^{k}_{n}(\Lambda))^{\sigma}=X^{k}_{n}(\Lambda^{\sigma})$, where $\Lambda^{\sigma} \in X_{n,d}$ is the tuple obtained from $\Lambda$ after applying $\sigma$ to each of its coordinates. The field of moduli of $X^{k}_{n}(\Lambda)$ (respectively, of $\Lambda$) is the fixed field of the subgroup of ${\rm Aut}(K)$ consisting of those $\sigma$ such that $(X^{k}_{n}(\Lambda))^{\sigma}$ is isomorphic to $X^{k}_{n}(\Lambda)$ (respectively,
the fixed field of the subgroup of ${\rm Aut}(K)$ consisting of those $\sigma$ such that there is a transformation $T_{\sigma} \in {\rm PGL}_{d+1}(K)$ sending the collection $\{\Sigma_{1}, \ldots, \Sigma_{n+1}\}$ onto the collection $\{\Sigma_{1}^{\sigma}, \ldots, \Sigma_{n+1}^{\sigma}\}$; which we denote it by the symbol $T_{\sigma}(\Lambda)=\Lambda^{\sigma}$).
Now, if we are under the hypothesis of Theorem \ref{maintheo}, we have the uniqueness of the generalized Fermat group $H \cong {\mathbb Z}_{k}^{n}$. This uniqueness ensures that $(X^{k}_{n}(\Lambda))^{\sigma}$ is isomorphic to $X^{k}_{n}(\Lambda)$ if and only if there is an element $T_{\sigma} \in {\rm PGL}_{d+1}(K)$ such that $T_{\sigma}(\Lambda)=\Lambda^{\sigma}$.
 This fact permits us to observe that the field of moduli of $X^{k}_{n}(\Lambda)$ is the same as the field of moduli of $\Lambda$. Elsewhere, we plan to use this to compute explicitly the field of moduli of 
$X^{k}_{n}(\Lambda)$ and to find explicit examples for which this is not a field of definition.
\end{remark}

\section{Preliminaries}
In the following, $K$ will denote an algebraically closed field of characteristic $p>0$, and all the algebraic varieties are assumed to be defined over it.

\subsection{Pardini's Theorem}
Let $Y$ be a smooth projective algebraic variety, $X$ be a normal projective irreducible algebraic variety, $G<$ Aut$(X)$ be a finite abelian group, and assume that $p$ does not divide the order of $G$. Let $\pi\colon X\to Y$ be an abelian cover with deck group $G$, that is to say, for every $g\in G$ it holds that $\pi\circ g=\pi$ and $Y=X/G$.

Let us denote by $G^*$ the \textbf{group of characters} of $G$, that is
$$G^*=\{\chi\ |\ \chi\colon G\to \mathcal{S}^1_K\ \text{homomorphism of groups}\},$$
where 
$$\mathcal{S}^1_K:=\{x\in K\ |\ x^j=1\text{ for some }j\in\{1,2,\cdots\}\}.$$ Since $G$ is a finite abelian group, we have that $G^*\cong G.$
We denote by ``1" the trivial character, that is $1(g)=1$ for all $g\in G$.
Let $\mathcal{O}_X$ be the sheaf of regular functions on $X$. By \cite{Be84}, the action of $G$ induces a splitting:
$$\pi_*\mathcal{O}_X=\bigoplus_{\chi\in G^*}\mathcal{L}_\chi^{-1}$$
where $G$ acts on $\mathcal{L}_\chi^{-1}$ via the character $\chi.$ The invariant summand $\mathcal{L}_1$ is isomorphic to $\mathcal{O}_Y$.
Let $R$ (respectively, $D$) be the ramification locus (respectively, the branch locus of $\pi$), that is to say, $R$ consist of the points of $X$ that have a nontrivial stabilizer and 
$\pi(R)=D$.
Let $T$ be an irreducible component of $R$. Then the \textbf{inertia group} of $T$ is defined as
$$G_T=\{h\in G\ |\ h(x)=x,\ \forall x\in T\}.$$

If $\mathfrak{C}$ denotes the set of cyclic subgroups of $G$ and, for all $\mathcal{Z}\in\mathfrak{C}$, $S_\mathcal{Z}$ is the set of generators of $\mathcal{Z}^*$ (the group of characters of ${\mathcal Z}$), then we may write:
$$D=\sum_{\mathcal{Z}\in\mathfrak{C}}\ \sum_{\psi\in S_\mathcal{Z}}D_{\mathcal{Z},\psi}$$
where $D_{\mathcal{Z},\psi}$ is the sum of all irreducible components of $D$ that have inertia group $\mathcal{Z}$ and character $\psi.$

The sheaves $\mathcal{L}_\chi,\chi\in G^*$, and the divisors $D_{\mathcal{Z},\psi}$ will be called the \textbf{building data} of the cover $\pi\colon X\to Y.$

Let $\chi_1,\cdots,\chi_s\in G^*$ be such that the group $G^*$ is the direct sum of the cyclic subgroups generated by $\chi_1,\cdots,\chi_s$. Write $\mathcal{L}_j$ for $\mathcal{L}_{\chi_j}$. We will call $\mathcal{L}_j,\ j=1,\cdots,s,\ D_{\mathcal{Z},\psi}$ a set of \textbf{reduced building data} for the cover $\pi\colon X\to Y$.

\begin{theo}[\cite{Par91}]\label{Prop 2.1 Pardini} Let $X,Y,\pi\colon X\to Y,\ G,\ \chi_1,\cdots,\chi_s,\ \mathcal{L}_1,\cdots,\mathcal{L}_s,\ D_{\mathcal{Z},\psi}$ be as before. Let $d_j$ be the order of $\chi_j$ and $\chi_{j}|_\mathcal{Z}=\psi^{r^j_{\mathcal{Z},\psi}},\ 0\leq r^j_{\mathcal{Z},\psi}<|\mathcal{Z}|,\ j=1,\cdots,s$. Then the following relations hold for the reduced building data of $\pi\colon X\to Y:$
\begin{equation}\label{equivalencia de Pardini}
d_j\mathcal{L}_j\equiv \sum_{\mathcal{Z},\psi}\dfrac{d_jr^j_{\mathcal{Z},\psi}}{|\mathcal{Z}|}D_{\mathcal{Z},\psi},\quad j=1,\cdots,s
\end{equation}
(notice that $\displaystyle \dfrac{d_jr^j_{\mathcal{Z},\psi}}{|\mathcal{Z}|}$ is an integer).

Conversely, given invertible sheaves $\mathcal{L}_j,\ j=1,\cdots,s,$ and divisors $D_{\mathcal{Z},\psi} $ such that \eqref{equivalencia de Pardini} holds, then it is possible to construct a cover $\pi\colon X\to Y$ with deck group $G$, determined uniquely up to isomorphism of cover with deck group $G$, and building data $\mathcal{L}_j,D_{\mathcal{Z},\psi}$.
\end{theo}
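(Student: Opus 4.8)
The plan is to exploit the sheaf-of-algebras structure carried by $\pi_*\mathcal{O}_X$ and to reduce the whole computation to the local model of a tame cyclic cover. Because $p\nmid|G|$, the group algebra $K[G]$ is semisimple and split --- all $|G|$-th roots of unity lie in the algebraically closed field $K$ --- and this is exactly what produces the eigensheaf decomposition $\pi_*\mathcal{O}_X=\bigoplus_{\chi\in G^*}\mathcal{L}_\chi^{-1}$ recalled above. The first observation is that $\pi_*\mathcal{O}_X$ is not merely a sheaf of $\mathcal{O}_Y$-modules but a sheaf of $\mathcal{O}_Y$-algebras, and that its multiplication respects the $G^*$-grading, hence restricts to maps $\mathcal{L}_\chi^{-1}\otimes\mathcal{L}_{\chi'}^{-1}\to\mathcal{L}_{\chi\chi'}^{-1}$, equivalently to canonical global sections $s_{\chi,\chi'}\in H^0(Y,\mathcal{L}_\chi\otimes\mathcal{L}_{\chi'}\otimes\mathcal{L}_{\chi\chi'}^{-1})$. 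Both implications of the theorem are statements about these sections: the forward one computes their divisors, the converse reconstructs the algebra from prescribed ones.

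For the forward direction I would work locally at a general point of a branch component $D_{\mathcal{Z},\psi}$. Over the \'etale locus every $s_{\chi,\chi'}$ is nowhere vanishing, so $\mathrm{div}(s_{\chi,\chi'})$ is supported on the branch divisor. Near a general point of $D_{\mathcal{Z},\psi}$, tameness and purity of the branch locus put the cover into Kummer normal form $t^{m}=u$, with $m=|\mathcal{Z}|$ and $u$ a local equation of $D_{\mathcal{Z},\psi}$; writing $\chi|_{\mathcal{Z}}=\psi^{r^{\chi}_{\mathcal{Z},\psi}}$ with $0\le r^{\chi}_{\mathcal{Z},\psi}<m$, the $\chi$-eigenspace is locally generated by $t^{\,r^{\chi}_{\mathcal{Z},\psi}}$. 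Multiplying the generators of the $\chi$- and $\chi'$-eigenspaces and re-expressing $t^{\,r^{\chi}_{\mathcal{Z},\psi}+r^{\chi'}_{\mathcal{Z},\psi}}$ through the generator of the $(\chi\chi')$-eigenspace forces a factor $u^{\varepsilon^{\chi,\chi'}_{\mathcal{Z},\psi}}$, where $\varepsilon^{\chi,\chi'}_{\mathcal{Z},\psi}=\lfloor(r^{\chi}_{\mathcal{Z},\psi}+r^{\chi'}_{\mathcal{Z},\psi})/m\rfloor\in\{0,1\}$. This reads off the multiplicity of $s_{\chi,\chi'}$ along $D_{\mathcal{Z},\psi}$ and yields the full building-data relations $\mathcal{L}_\chi+\mathcal{L}_{\chi'}\equiv\mathcal{L}_{\chi\chi'}+\sum_{\mathcal{Z},\psi}\varepsilon^{\chi,\chi'}_{\mathcal{Z},\psi}D_{\mathcal{Z},\psi}$. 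Taking $\chi=\chi_j^{i}$, $\chi'=\chi_j$ and summing over $i=0,\dots,d_j-1$ telescopes the left side to $d_j\mathcal{L}_j$, while $\sum_{i=0}^{d_j-1}\varepsilon^{\chi_j^{i},\chi_j}_{\mathcal{Z},\psi}=d_j r^{j}_{\mathcal{Z},\psi}/|\mathcal{Z}|$ simply counts the wrap-arounds mod $m$ over one full period of $\chi_j$. This is exactly \eqref{equivalencia de Pardini}, and the count makes the integrality of the coefficients transparent.

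For the converse I would first use \eqref{equivalencia de Pardini} to propagate the reduced data to all characters: to each $\chi\in G^*$ one assigns the class $\mathcal{L}_\chi\equiv\sum_{\mathcal{Z},\psi}\tfrac{r^{\chi}_{\mathcal{Z},\psi}}{|\mathcal{Z}|}D_{\mathcal{Z},\psi}+(\text{integral correction})$, where the correction is the unique one compatible with the relations $\chi_j^{d_j}=1$ that present $G^*$; checking well-definedness is precisely checking those relations against \eqref{equivalencia de Pardini}. Then I would set $\mathcal{A}=\bigoplus_{\chi\in G^*}\mathcal{L}_\chi^{-1}$ and define its multiplication by inserting the distinguished sections $s_{\mathcal{Z},\psi}$ cutting out $D_{\mathcal{Z},\psi}$ with the exponents $\varepsilon^{\chi,\chi'}_{\mathcal{Z},\psi}$ found above. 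Letting $\pi\colon X\to Y$ with $X$ the relative spectrum of $\mathcal{A}$ over $Y$, the $G^*$-grading induces the $G$-action with $X/G=Y$; normality and the prescribed inertia follow because, \'etale-locally on $Y$, $\mathcal{A}$ is the Kummer algebra $t^{m}=u$, which is normal with the correct cyclic ramification. Uniqueness up to isomorphism of covers follows since any two choices of local trivializations of the $\mathcal{L}_\chi$ differ by units that are absorbed into an isomorphism of graded $\mathcal{O}_Y$-algebras.

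I expect the main obstacle to lie in the converse: proving that the multiplication on $\mathcal{A}$ is associative. Component by component along each $D_{\mathcal{Z},\psi}$ this reduces to the cocycle identity $\varepsilon^{\chi,\chi'}_{\mathcal{Z},\psi}+\varepsilon^{\chi\chi',\chi''}_{\mathcal{Z},\psi}=\varepsilon^{\chi',\chi''}_{\mathcal{Z},\psi}+\varepsilon^{\chi,\chi'\chi''}_{\mathcal{Z},\psi}$, which holds because both sides equal $\lfloor(r^{\chi}_{\mathcal{Z},\psi}+r^{\chi'}_{\mathcal{Z},\psi}+r^{\chi''}_{\mathcal{Z},\psi})/|\mathcal{Z}|\rfloor$; the delicate part is to arrange the distinguished local sections so that the associator is literally the identity rather than a nonzero scalar, and to confirm that the resulting $X$ is irreducible rather than a disjoint union of smaller covers. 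Throughout, the hypothesis $p\nmid|G|$ is indispensable: it furnishes tameness, the Kummer normal form, and the splitting of $K[G]$ on which the entire argument rests.
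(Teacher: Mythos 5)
The paper itself contains no proof of this statement: it is imported verbatim from Pardini \cite{Par91} (her Proposition~2.1), with only a remark comparing hypotheses. Your sketch correctly reconstructs Pardini's original argument --- the $G^*$-graded algebra structure on $\pi_*\mathcal{O}_X$, the local Kummer computation at a general point of each $D_{\mathcal{Z},\psi}$ yielding the fundamental relations $\mathcal{L}_\chi+\mathcal{L}_{\chi'}\equiv\mathcal{L}_{\chi\chi'}+\sum_{\mathcal{Z},\psi}\varepsilon^{\chi,\chi'}_{\mathcal{Z},\psi}D_{\mathcal{Z},\psi}$, the telescoping over powers of $\chi_j$ that produces \eqref{equivalencia de Pardini} together with its integrality, and the converse via the relative spectrum of $\bigoplus_{\chi}\mathcal{L}_\chi^{-1}$ with associativity reduced to the floor-function cocycle identity --- so it takes essentially the same route as the cited proof.
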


\begin{remark}
The Proposition 2.1 of \cite{Par91} is more general than the Theorem \ref{Prop 2.1 Pardini}, since we assume that $X$ be a normal projective irreducible algebraic variety and $Y$ be a smooth projective algebraic variety, whereas in \cite{Par91} requires $X$ to be a normal algebraic variety and $Y$ to be a smooth and complete algebraic variety.
\end{remark}

\subsection{Equivalence of generalized Fermat pairs}
We use Pardini's result to obtain the following rigidity observation.

\begin{theo}\label{Theo FGP isomorphism} Let $(X_1,H_1)$ and $(X_2,H_2)$ be generalized Fermat pairs of the same type $(d;k,n)$ and corresponding regular branched covering
$$\pi_j\colon X_j\to{\mathbb P}^d_K,\quad j=1,2.$$
If $\pi_1$ and $\pi_2$ have the same branching locus divisor
$$D=\Sigma_1+\cdots+\Sigma_{n+1},$$
then there exists $\phi\colon X_1\to X_2$ isomorphism such that 
$$\pi_1=\pi_2\circ\phi.$$
\end{theo}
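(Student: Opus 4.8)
The plan is to reduce everything to the uniqueness clause of Pardini's Theorem \ref{Prop 2.1 Pardini}, applied with base $Y={\mathbb P}^d_K$. Since $\pi_1$ and $\pi_2$ share the branch divisor $D=\Sigma_1+\cdots+\Sigma_{n+1}$, I will show that, after a suitable identification of the two deck groups with a single abstract group $G\cong{\mathbb Z}_k^n$, the two covers have literally the same reduced building data; the conclusion then follows from the ``conversely'' part of Theorem \ref{Prop 2.1 Pardini}, which produces a $G$-equivariant isomorphism $\phi\colon X_1\to X_2$ with $\pi_1=\pi_2\circ\phi$.

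First I would record the building data of a generalized Fermat cover $\pi\colon X\to{\mathbb P}^d_K$ with group $H$. Each branch hyperplane $\Sigma_i$ has branch order $k$, so its inertia group $\mathcal{Z}_i<H$ is cyclic of order $k$; let $a_i\in H$ be its canonical generator, namely the element acting on the conormal direction of $\Sigma_i$ by a fixed primitive $k$-th root of unity $\zeta\in K$ (this also pins down the generating character $\psi_i$ of $\mathcal{Z}_i^*$ via $\psi_i(a_i)=\zeta$). Thus the divisorial building data is $D_{\mathcal{Z}_i,\psi_i}=\Sigma_i$, $i=1,\ldots,n+1$, and is completely encoded by the configuration $(a_1,\ldots,a_{n+1})$ in $H$. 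Two properties are key: the $a_i$ generate $H$, and $\sum_{i=1}^{n+1}a_i=0$. Generation holds because $X$ is irreducible and ${\mathbb P}^d_K$ has trivial \'etale fundamental group: if the $a_i$ generated a proper subgroup $H'<H$, then $X/H'\to{\mathbb P}^d_K$ would be a nontrivial connected \'etale cover (the remaining ramification, lying over $D$, is killed by quotienting by $H'$), which is impossible. The relation $\sum a_i=0$ is exactly the solvability of \eqref{equivalencia de Pardini} over ${\mathbb P}^d_K$: since $\mathrm{Pic}({\mathbb P}^d_K)={\mathbb Z}\cdot H$ and $\Sigma_i\equiv H$, the integrality of each coefficient $d_j r^j_{\mathcal{Z}_i,\psi_i}/|\mathcal{Z}_i|$ forces $\chi_j(\sum_i a_i)=1$ for every character $\chi_j$, whence $\sum_i a_i=0$.

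Next I would prove the combinatorial rigidity of such configurations. Given any $(a_1,\ldots,a_{n+1})$ in $G\cong{\mathbb Z}_k^n$ that generates $G$ and satisfies $\sum a_i=0$, the homomorphism ${\mathbb Z}_k^{n+1}\to G$, $e_i\mapsto a_i$, is surjective and hence has kernel of order $k$; as it contains the order-$k$ element $\sum_i e_i=(1,\ldots,1)$, its kernel is exactly $\langle(1,\ldots,1)\rangle$, so the configuration is canonically isomorphic to the image of the standard basis in ${\mathbb Z}_k^{n+1}/\langle(1,\ldots,1)\rangle$. Consequently any two such configurations are related by a unique group isomorphism carrying one to the other in order. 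Applying this to the configurations $(a_i^{(1)})_i\subset H_1$ and $(a_i^{(2)})_i\subset H_2$ read off from $\pi_1$ and $\pi_2$ along the common components $\Sigma_i$, I obtain an isomorphism $\alpha\colon H_1\to H_2$ with $\alpha(a_i^{(1)})=a_i^{(2)}$ for all $i$.

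Finally, I would transport the $H_1$-action along $\alpha$ so that both covers become covers with the same deck group $G:=H_2$. By construction $\alpha$ matches inertia groups together with their distinguished generators, hence the characters $\psi_i$, so the two covers have identical divisorial data $D_{\mathcal{Z},\psi}$. The sheaves $\mathcal{L}_{\chi_j}$ are then forced to agree: because $\mathrm{Pic}({\mathbb P}^d_K)\cong{\mathbb Z}$ is torsion-free, the relation $d_j\mathcal{L}_{\chi_j}\equiv\sum_{\mathcal{Z},\psi}(d_j r^j_{\mathcal{Z},\psi}/|\mathcal{Z}|)D_{\mathcal{Z},\psi}$ determines each $\mathcal{L}_{\chi_j}$ uniquely by its degree, and the right-hand side is the same for both covers. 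Thus $\pi_1$ and $\pi_2$ have the same reduced building data relative to $G$, and the uniqueness assertion of Theorem \ref{Prop 2.1 Pardini} yields the desired $\phi$. The main obstacle is the bookkeeping in the last two paragraphs: one must verify that the canonical generators $a_i$ (equivalently, the characters $\psi_i$) are intrinsic to each cover and are genuinely matched by $\alpha$, so that the building data coincide \emph{on the nose} rather than merely up to the ambiguity in choosing generators of the inertia groups.
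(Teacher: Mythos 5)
Your proposal is correct and takes essentially the same route as the paper: both reduce the statement to the uniqueness clause of Pardini's Theorem \ref{Prop 2.1 Pardini}, with the divisorial building data given by the common branch divisor $\Sigma_1,\cdots,\Sigma_{n+1}$ and the sheaves $\mathcal{L}_{\chi_j}$ pinned down by the relations \eqref{equivalencia de Pardini} because $\mathrm{Pic}({\mathbb P}^d_K)\cong{\mathbb Z}$ is torsion-free. Your extra bookkeeping (canonical inertia generators, generation of $H$ by them via purity and simple connectedness of ${\mathbb P}^d_K$, and the relation $\sum_i a_i=0$) supplies exactly the details the paper leaves implicit in its choice of a ``standard set of generators'' matched to the branch hyperplanes.
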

\begin{proof}
Let us consider a generalized Fermat pair of type $(d;k,n)$: $(X,H)$.\\ We have $H\cong{\mathbb Z}_k^n$ and exists a cover map $\pi\colon X\to{\mathbb P}^d_K$ whose branch divisor is 
$$D=\Sigma_1+\cdots+\Sigma_{n+1},$$
where $\Sigma_1,\cdots,\Sigma_{n+1}$ are hyperplanes in general position.

Let us consider a standard set of generators of $H,\ \{a_1,\cdots,a_{n+1}\}$, i.e.,
$$a_1\cdots a_{n+1}=1,\quad H=\langle a_1,\cdots,a_n\rangle.$$

Let us denote by $U_k\subset K\setminus\{0\}$ the set of $k-$roots of 1; which is a cyclic group of order $k$ with the operation of multiplication (i.e. $U_k\cong{\mathbb Z}_k$). Let $\omega_k\in U_k$ be a generator of $U_k$, i.e., $U_k=\langle\omega_k\rangle.$\\\\
Let $H^*$ the group of characters of $H$, then
$$H^*\cong H\cong {\mathbb Z}_k^n.$$
A set of generators of $H^*$ is
$$\chi_1,\cdots,\chi_n,$$
where
$$\chi_j(a_i)=\begin{cases}\omega_k&;\ \ i=j\\1&;\ \ i\in\{1,\cdots,n\}\setminus\{j\}\\\omega_k^{-1}&;\ \ i=n+1\end{cases}$$

Let
\begin{align*}
z_j=&\ \langle a_j\rangle,\quad j=1,\cdots,n+1.\\
z_j^*=&\ \text{group of characters of }z_j=\langle\psi_j\rangle\cong{\mathbb Z}_k,\text{ where }\\
\psi_j(a_j)=&\ \omega_k.
\end{align*}
We have, for $i=1,\cdots,n,\ j=1,\cdots,n+1$
$$\chi_i\Big|_{z_j}\in z_j^*\Rightarrow\ \chi_i\Big|_{z_j}=\psi^{r_{i,j}},$$
where
$$r_{i,j}=\begin{cases}1&;\ \ i=j\\0&;\ \ i\in\{1,\cdots,n\}\setminus\{j\}\end{cases}$$
if $j\neq n+1,$ and
$$r_{i,n+1}=k-1.$$

Note that by the Theorem \ref{Prop 2.1 Pardini}, using that 
\begin{itemize}
\item $X$ smooth,
\item $Y={\mathbb P}^d_K$ smooth,
\item $G=H$, the deck group of $\pi$,
\item $r^j_{H,\psi}=r_{i,j}$,
\item $d_j=k$,
\end{itemize}
we conclude that the abelian cover $\pi\colon X\to{\mathbb P}^d_K$ is uniquely determined, up to isomorphisms, by the building data:
\begin{itemize}
\item $\chi_1,\cdots\chi_n$: uniquely determined by $(k,n)$,
\item $\mathcal{L}_{\chi_1},\cdots,\mathcal{L}_{\chi_n}$: there are our variables,
\item $\Sigma_1,\cdots,\Sigma_{n+1}:$ the given support of our branch divisors.
\end{itemize}

The line bundles $\mathcal{L}_{\chi_1},\cdots,\mathcal{L}_{\chi_n}$ must satisfy the relations:
\begin{equation}\label{equivalencia nuestro caso}
k\mathcal{L}_{\chi_i}\equiv \sum_{j=1}^{n+1}r_{i,j}\Sigma_j=\Sigma_i+(k-1)\Sigma_{n+1}.
\end{equation}
These last equivalences are those that appear in \eqref{equivalencia de Pardini}, applied to our case.

As the relations, from \eqref{equivalencia nuestro caso}, ``determine"  the line bundles (up to linear equivalence), we obtain that $(X,H)$ is uniquely determined by $(d;k,n)$ and $\Sigma_1,\cdots,\Sigma_{n+1}$ and this proves the theorem.
\end{proof}

\begin{coro}
Let $(X_1,H_1)$ and $(X_2,H_2)$ be generalized Fermat pairs of the same type $(d;k,n)$. 
There is an isomorphism $\phi\colon X_1\to X_2$ such that 
$$\phi H_1\phi^{-1}=H_2,$$
if and only if, there exists an isomorphism  $\psi\colon {\mathbb P}^d_K\to {\mathbb P}^d_K$ such that it carries the branch locus of $\pi_1\colon X_1\to{\mathbb P}^d_K$ to that of $\pi_2\colon X_2\to{\mathbb P}^d_K$. 
\end{coro}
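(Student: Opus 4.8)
The plan is to prove both implications by linking the conjugation condition $\phi H_1\phi^{-1}=H_2$ to the descent of $\phi$ to the quotients $X_j/H_j={\mathbb P}^d_K$, and to invoke Theorem \ref{Theo FGP isomorphism} for the nontrivial direction. Throughout, write $R_j$ for the ramification locus of $\pi_j$ and $D_j=\pi_j(R_j)$ for its branch locus, $j=1,2$.

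For the direct implication, assume $\phi\colon X_1\to X_2$ is an isomorphism with $\phi H_1\phi^{-1}=H_2$. First I would observe that $\phi$ carries $H_1$-orbits to $H_2$-orbits, since for $x\in X_1$
$$\phi(H_1\cdot x)=(\phi H_1\phi^{-1})\cdot\phi(x)=H_2\cdot\phi(x),$$
so by the universal property of the quotient maps $\pi_j\colon X_j\to X_j/H_j={\mathbb P}^d_K$, the map $\phi$ descends to an isomorphism $\psi\colon{\mathbb P}^d_K\to{\mathbb P}^d_K$ with $\pi_2\circ\phi=\psi\circ\pi_1$. Next I would note that $\phi$ preserves ramification: a point $x$ lies in $R_1$ iff its stabilizer in $H_1$ is nontrivial, iff the stabilizer of $\phi(x)$ in $H_2=\phi H_1\phi^{-1}$ is nontrivial, iff $\phi(x)\in R_2$; hence $\phi(R_1)=R_2$. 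Applying $\pi_2$ and using the intertwining relation then gives $\psi(D_1)=\pi_2(\phi(R_1))=\pi_2(R_2)=D_2$, so $\psi$ carries the branch locus of $\pi_1$ onto that of $\pi_2$.

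For the converse, assume $\psi\colon{\mathbb P}^d_K\to{\mathbb P}^d_K$ is an isomorphism with $\psi(D_1)=D_2$. The idea is to re-base the second cover so that the two branch divisors coincide, and then apply Theorem \ref{Theo FGP isomorphism}. Set $\tilde\pi_2=\psi^{-1}\circ\pi_2\colon X_2\to{\mathbb P}^d_K$. A deck transformation of $\tilde\pi_2$ is precisely an element of $H_2$, because $\psi^{-1}\circ\pi_2\circ g=\psi^{-1}\circ\pi_2$ is equivalent to $\pi_2\circ g=\pi_2$; hence $(X_2,H_2)$ equipped with the cover $\tilde\pi_2$ is again a generalized Fermat pair of type $(d;k,n)$, now with branch divisor $\psi^{-1}(D_2)=D_1$. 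Since $\pi_1$ and $\tilde\pi_2$ share the branch divisor $D_1$, Theorem \ref{Theo FGP isomorphism} yields an isomorphism $\phi\colon X_1\to X_2$ with $\pi_1=\tilde\pi_2\circ\phi$.

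It remains to verify the conjugation, which I expect to be the only genuinely delicate point, since Theorem \ref{Theo FGP isomorphism} supplies only the relation $\pi_1=\tilde\pi_2\circ\phi$ and not, a priori, any compatibility of the deck groups. I would argue directly: for $h\in H_1$, using $\tilde\pi_2\circ\phi=\pi_1$ (equivalently $\tilde\pi_2=\pi_1\circ\phi^{-1}$) together with $\pi_1\circ h=\pi_1$,
$$\tilde\pi_2\circ(\phi h\phi^{-1})=(\tilde\pi_2\circ\phi)\circ h\circ\phi^{-1}=\pi_1\circ h\circ\phi^{-1}=\pi_1\circ\phi^{-1}=\tilde\pi_2,$$
so $\phi h\phi^{-1}$ is a deck transformation of $\tilde\pi_2$ and therefore lies in $H_2$. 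This gives $\phi H_1\phi^{-1}\subseteq H_2$, and since both groups are isomorphic to ${\mathbb Z}_k^n$ and $\phi$ is an isomorphism, equality follows. This closes the converse and completes the proof.
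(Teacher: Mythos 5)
Your proof is correct and follows exactly the route the paper intends: the corollary is stated there without proof as an immediate consequence of Theorem \ref{Theo FGP isomorphism}, and your argument (descent of $\phi$ to the quotients for the forward direction; re-basing $\pi_2$ by $\psi^{-1}$ and invoking Theorem \ref{Theo FGP isomorphism} for the converse) is precisely the intended filling-in of details. Your care with the conjugation step --- noting that the deck group of the Galois cover $\tilde\pi_2$ is exactly $H_2$, so $\phi h\phi^{-1}\in H_2$, with equality by finiteness --- supplies the one point the paper leaves tacit.
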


\subsection{The parameter space $X_{n,d}$.}\label{Subsection The parameter space}
If $d \geq 1$, then there is a natural one-to-one correspondence between hyperplanes and points in ${\mathbb P}_{K}^{d}$. This correspondence comes from the fact that each hyperplane $\Sigma \subset {\mathbb P}^{d}_{K}$ (with projective coordinates $[x_{1}:\cdots:x_{d+1}]$) has equation of the form 
$$\Sigma(\rho)=\{\rho_{1}x_{1}+\cdots+\rho_{d+1}x_{d+1}=0\}, \mbox{where }  \rho=[\rho_{1}:\cdots:\rho_{d+1}] \in {\mathbb P}_{K}^{d}.$$

A collection of hyperplanes $\Sigma(q_1),\cdots,\Sigma(q_{n+1}) \subset {\mathbb P}_{K}^{d}$, where $n \geq d+1$, is in general position if:
\begin{enumerate}
\item for $d=1$, they are pairwise different;
\item or $d\geq2$, they are pairwise different, and any $3\leq s\leq d+1$ of these hyperplanes intersect at a $(d-s)-$dimensional plane (where negative dimension means the empty set).
\end{enumerate}

In the above situation, the divisor $D$, formed by the hyperplanes $\Sigma(q_1),\cdots,\Sigma(q_{n+1})\subset {\mathbb P}^{d}_{K}$, is a strict normal crossing divisor.

\begin{remark}\label{Remark condicion de posicion general}
Note that the hyperplanes $\Sigma(q_1),\cdots,\Sigma(q_{n+1})\subset{\mathbb P}^{d}_{K}$ are in general position if the corresponding (pairwise different) points $q_1,\cdots,q_{n+1}\in{\mathbb P}^{d}_{K}$ are in general position. This means that, for $d\geq2,$ any subset $3\leq s\leq d+1$ of these points spans a $(s-1)-$plane $\Sigma\subset{\mathbb P}^{d}_{K}.$
If $q_j=[\rho_{1,j}:\cdots:\rho_{d+1,j}]\in{\mathbb P}^d_K,$ for $j=1,\cdots,n+1$, then the hyperplanes $\Sigma(q_1),\cdots,\Sigma(q_{n+1})$ are in general position if and only if, all $(d+1)\times (d+1)-$minors of the matrix
$$M(q_1\cdots,q_{n+1}):=\left(\begin{matrix}
\rho_{1,1}&\cdots&\rho_{1,n+1}\\\vdots&\vdots&\vdots\\\rho_{d+1,1}&\cdots&\rho_{d+1,n+1}
\end{matrix}\right)\in M_{(d+1)\times(n+1)}(K)$$
are nonzero.
\end{remark}

Given an ordered collection $(\Sigma(q_1),\cdots,\Sigma(q_{n+1}))$, where $n\geq d+1$, of hyperplanes in ${\mathbb P}^{d}_{K}$  in general position, there is a unique projective linear automorphism $T\in$ PGL$_{d+1}(K)$ such that $T(\Sigma(q_j))=\Sigma(e_j)$, for $j=1,\cdots,d+2$, where $e_1:=[1:0:\cdots:0],\cdots,e_{d+1}:=[0:\cdots:0:1]$ and $e_{d+2}:=[1:\cdots:1]$. In this case, for $j=d+3,\cdots,n+1$, one has that $T(\Sigma(q_j))=\Sigma(\Lambda_{j-d-2})$, where
$$\Lambda_i:=[\lambda_{i,1}:\cdots:\lambda_{i,d}:1]\in{\mathbb P}^d_K,\ \text{for }\ i=1,\cdots,n-d-1.$$
For $j=1,\cdots,d$, we set $\lambda_j:=\ (\lambda_{1,j},\cdots,\lambda_{n-d-1,j})\in K^{n-d-1},\ \Lambda:=(\lambda_1,\cdots,\lambda_d)$ and
$$\Sigma_j(\Lambda):=\begin{cases}
\Sigma(e_j)&;\text{ if }j=1,\cdots,d+2\\
\Sigma(\Lambda_{j-d-2})&;\text{ if }j=d+3,\cdots,n+1.
\end{cases}
$$
In this way, the tuples $(\Sigma(q_1),\cdots,\Sigma(q_{n+1}))$ and $(\Sigma_1(\Lambda),\cdots,\Sigma_{n+1}(\Lambda))$ are PGL$_{d+1}(K)-$ equivalent.

A parameter space $X_{n,d}$ of such tuples, up to the action of PGL$_{d+1}(K)$, can be described as follows.\\
For $n=d+1$, $X_{d+1,d}:=\{(1,\overset{d}{\cdots},1)\}$ and for $n\geq d+2$,  
$$X_{n,d}:=\{\Lambda\in K^{d(n-d-1)}\ |\ \Sigma_j(\Lambda),\ j=1,\cdots,n+1,\text{ are in general position}\},$$
where $K^{d(n-d-1)}=K^{n-d-1}\times\overset{d}{\cdots}\times K^{n-d-1}$. We observe that, in this case, $X_{n,d}$ is a non-empty set.

\subsection{Algebraic models of generalized Fermat pairs}\label{Section Algebraic Model}
Let $d \geq 1$, $n\geq d+1$, and $k\geq 2$ which is relatively prime to $p$ (the characteristic of the algebraically closed field $K$). 
Let $\Lambda=(\lambda_1,\cdots,\lambda_d)\in X_{n,d}$, where $\lambda_j=(\lambda_{1,j},\cdots,\lambda_{n-d-1,j})\in K^{n-d-1}.$
\begin{equation}\label{Equation Algebraic Model}
X_n^k(\Lambda):=\left\{\begin{matrix}
x_1^k+\cdots+x_d^k+x_{d+1}^k+x_{d+2}^k&=&\ 0\\
\lambda_{1,1}x_1^k+\cdots+\lambda_{1,d}x_d^k+x_{d+1}^k+x_{d+3}^k&=&\ 0\\
\vdots&\vdots&\ \vdots\\
\lambda_{n-d-1,1}x_1^k+\cdots+\lambda_{n-d-1,d}x_d^k+x_{d+1}^k+x_{n+1}^k&=&\ 0
\end{matrix}\right\}\subset{\mathbb P}_K^n.
\end{equation}

Let $H_0:=\langle\varphi_1,\cdots,\varphi_{n+1}\rangle$, where
$$\varphi_j([x_1:\cdots:x_j:\cdots:x_{n+1}]):=[x_1:\cdots:\omega_kx_j:\cdots:x_{n+1}],$$
and $\omega_k$ is a primitive $k-$th root of unity. Let $\rm{Fix}(\varphi_j)\subset X_n^k(\Lambda)$ be the set of fixed points of $\varphi_j$ and $F(H_0):=\cup_{j=1}^{n+1}\rm{Fix}(\varphi_j)$.

The following facts can be deduced from the above.
\begin{itemize}
\item $H_0\cong{\mathbb Z}_n^k$.
\item $\varphi_1\varphi_2\cdots\varphi_{n+1}=1.$
\item $H_0<\rm{Aut}(X_n^k(\Lambda))<{\rm{PGL}_{n+1}(K)}.$
\end{itemize}
\begin{theo}
If $\Lambda\in X_{n,d}$, then $X_n^k(\Lambda)$ is an irreducible nonsingular complete intersection.
\end{theo}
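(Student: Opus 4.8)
The plan is to prove smoothness and the complete intersection property simultaneously, by showing that the Jacobian of the $n-d$ defining equations in \eqref{Equation Algebraic Model} has full rank $n-d$ at every point of $X_n^k(\Lambda)$; irreducibility will then follow from the connectedness of complete intersections of positive dimension. Denote the defining polynomials by $F_0,\dots,F_{n-d-1}$, where $F_0=\sum_{j=1}^{d+2}x_j^k$ and $F_i=\sum_{j=1}^{d}\lambda_{i,j}x_j^k+x_{d+1}^k+x_{d+2+i}^k$ for $i=1,\dots,n-d-1$. Since $\gcd(k,p)=1$, the scalar $k$ is a unit in $K$, so up to this unit the entries of the Jacobian are $\partial F_\ell/\partial x_j=k\,c_{\ell,j}\,x_j^{k-1}$, where the coefficient vector $(c_{\ell,1},\dots,c_{\ell,d},1)$ of $F_\ell$ on $x_1^k,\dots,x_{d+1}^k$ is exactly the column $p_{d+2+\ell}$ of the $(d+1)\times(n+1)$ matrix $M$ whose columns are $e_1,\dots,e_{d+1}$, $[1:\cdots:1]$, $\Lambda_1,\dots,\Lambda_{n-d-1}$ (so, by Remark \ref{Remark condicion de posicion general}, general position means that all maximal minors of $M$ are nonzero).

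The structural observation is that the variable $x_{d+2+i}$ occurs only in $F_i$. Hence the columns of the Jacobian indexed by $x_{d+2},\dots,x_{n+1}$ form, up to the unit $k$, the diagonal matrix $\mathrm{diag}(x_{d+2}^{k-1},\dots,x_{n+1}^{k-1})$. Fix a point $P\in X_n^k(\Lambda)$ and let $I\subseteq\{0,\dots,n-d-1\}$ be the set of indices $i$ with $x_{d+2+i}(P)=0$. Each row $F_\ell$ with $\ell\notin I$ carries a nonzero pivot in its own private column, a column in which every other row vanishes; clearing the $x_1,\dots,x_{d+1}$ entries of these rows with their private columns leaves the rows indexed by $I$ supported only on $x_1,\dots,x_{d+1}$. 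Consequently the Jacobian at $P$ has rank $n-d$ if and only if the rows indexed by $I$, restricted to the columns $x_1,\dots,x_{d+1}$, have rank $|I|$. This restricted block equals $C\cdot\mathrm{diag}(x_1^{k-1},\dots,x_{d+1}^{k-1})$, where $C$ is the $|I|\times(d+1)$ matrix whose rows are the points $p_{d+2+i}$, $i\in I$.

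It remains to control this rank using general position. First, not all of $x_1(P),\dots,x_{d+1}(P)$ can vanish: otherwise $F_0$ and the $F_i$ would force every coordinate of $P$ to vanish, which is impossible in ${\mathbb P}^n_K$. Thus $\bar P:=[x_1(P)^k:\cdots:x_{d+1}(P)^k]\in{\mathbb P}^d_K$ is a well-defined point, and $x_j(P)=0$ if and only if $\bar P\in\Sigma_j(\Lambda)$. Since $\Sigma_1(\Lambda),\dots,\Sigma_{n+1}(\Lambda)$ are in general position, no $d+1$ of them pass through a common point, so the set $Z:=\{j:x_j(P)=0\}$ satisfies $|Z|\le d$. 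Put $Z_0:=Z\cap\{1,\dots,d+1\}$. The diagonal factor $\mathrm{diag}(x_1^{k-1},\dots,x_{d+1}^{k-1})$ is invertible on the columns outside $Z_0$, so the rank of the block above equals the rank of the matrix $C'$ obtained from $C$ by deleting the columns indexed by $Z_0$. Now consider the columns of $M$ indexed by $Z$: these are the standard vectors $e_j$ for $j\in Z_0$ together with the points $p_{d+2+i}$, $i\in I$, a total of $|Z|\le d+1$ columns, which are linearly independent because every maximal minor of $M$ is nonzero. Since the $e_j$ with $j\in Z_0$ occupy exactly the coordinates in $Z_0$, a block-triangular computation shows that the points $p_{d+2+i}$, $i\in I$, remain linearly independent after deleting the coordinates in $Z_0$; that is, $\mathrm{rank}(C')=|I|$. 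Therefore the Jacobian has rank $n-d$ at $P$, and $P$ is a smooth point.

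By Krull's height theorem every irreducible component of $X_n^k(\Lambda)$ has codimension at most $n-d$, while the rank computation above shows that each point is smooth of codimension exactly $n-d$; moreover $X_n^k(\Lambda)$ is nonempty because it is cut out by $n-d\le n$ hypersurfaces in ${\mathbb P}^n_K$. Hence $X_n^k(\Lambda)$ is a smooth complete intersection of pure dimension $d$. Being a complete intersection of dimension $d\ge 1$, it is connected, and a smooth connected variety is irreducible, which completes the proof. The main obstacle is the last linear-algebra step of the third paragraph: converting the general position hypothesis (all maximal minors of $M$ nonzero) into the precise equality $\mathrm{rank}(C')=|I|$, which requires the bookkeeping of exactly which coordinates vanish at $P$ together with the block-triangular argument relating the vanishing columns and rows.
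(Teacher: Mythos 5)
Your proof is correct, and its core coincides with the paper's: both establish that the Jacobian of the $n-d$ defining equations of \eqref{Equation Algebraic Model} has maximal rank at every point of $X_n^k(\Lambda)$, using the observation that at most $d$ coordinates of a point can vanish (equivalently, no $d+1$ of the hyperplanes $\Sigma_1(\Lambda),\ldots,\Sigma_{n+1}(\Lambda)$ share a common point), which is exactly where general position enters through the nonvanishing maximal minors of the matrix $M$ of Remark \ref{Remark condicion de posicion general}. The differences are worth recording. First, the paper carries out the rank computation only for $d=2$ (and there the passage from ``at most $d$ coordinates vanish'' to ``the Jacobian has rank $n-2$'' is asserted rather than argued), deferring the general case to \cite{Te88}[Proposition 3.1.2]; your private-column reduction, the identification of the residual block as $C\cdot\mathrm{diag}(x_1^{k-1},\ldots,x_{d+1}^{k-1})$, and the block-triangular step proving $\mathrm{rank}(C')=|I|$ supply, uniformly in $d$, precisely the linear algebra the paper leaves implicit, so your argument is self-contained where the paper relies on a citation. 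Second, for irreducibility the paper proves the defining ideal is prime via the regular-sequence/Cohen--Macaulay setup and the Jacobian criterion of \cite{Eis95}[Th.\ 18.15], following \cite{Kon02}, with the empty singular locus providing the needed codimension bound; you instead invoke connectedness of positive-dimensional complete intersections and conclude irreducibility from smoothness plus connectedness. The two routes are equivalent in substance (both ultimately rest on connectedness of complete intersections, and your smoothness already yields reducedness, so you lose nothing scheme-theoretically), but yours is the more economical endgame, while the paper's phrasing delivers primeness of $I_{k,n}$ directly. One small point you state without verification --- that for $j\geq d+2$ one has $x_j(P)=0$ if and only if $\bar P\in\Sigma_j(\Lambda)$ --- does require the defining equations of $X_n^k(\Lambda)$ (e.g., $F_0=0$ converts $x_1^k+\cdots+x_{d+1}^k=0$ into $x_{d+2}=0$); it is a one-line check, and the paper is equally terse about it, so this is a presentational remark rather than a gap.
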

\begin{proof}
Let us observe that the matrix of coefficients of \eqref{Equation Algebraic Model} has rank $(m+1)\times(m+1)$ and all of its $(m+1)\times(m+1)-$minors are different from zero (this is the general condition of the $n+1$ hyperplanes). The result follows from \cite{Te88}[Proposition 3.1.2]. For explicitness, let us workout the case $d=2$ (the case $d=1,\ K={\mathbb C}$ was noted in \cite{GHL09}).

Set $\lambda_{0,1}=\lambda_{0,2}=1$ and consider the following degree $k$ homogeneous polynomials $f_i:=\lambda_{i,1}x_1^k+\lambda_{i,2}x_2^k+x_3+x_{4+i}^k\in K[x_1,\cdots,x_{n+1}]$, where $i\in\{0,1,\cdots,n-3\}$. Let $V_i\subset {\mathbb P}^n_K$ be the hypersurface given as the zero locus of $f_i$.

The algebraic set $X_n^k(\lambda_1,\lambda_2)$ is the intersection of the $(n-2)$ hypersurfaces $V_0,\cdots,V_{n-3}$. We consider the matrix of $\nabla f_i$, written as rows.
\begin{equation}\label{nabla f_i}
\left(\begin{array}{ccccccc}
kx_1^{k-1}&kx_2^{k-1}&kx_3^{k-1}&kx_4^{k-1}&0&\cdots&0\\
\lambda_{1,1}kx_1^{k-1}&\lambda_{1,2}kx_2^{k-1}&kx_3^{k-1}&0&kx_5^{k-1}&\cdots&0\\
\vdots&\vdots&\vdots&\vdots&\vdots&\vdots&\vdots\\
\lambda_{n-3,1}kx_1^{k-1}&\lambda_{n-3,2}kx_2^{k-1}&kx_3^{k-1}&0&\cdots&0&kx_{n+1}^{k-1}
\end{array}\right)
\end{equation}
Recall that $k$ and $p$ are relatively prime, so $k\neq0$. By the defining equations of the curve, and as $(\lambda_1,\lambda_2)\in X_{n,2}$, we see that a point which has three variables $x_i=x_j=x_l=0$ for $i\neq j\neq l\neq i$ and $1\leq i,j,l\leq n+1$ has also $x_t=0$ for $t=1,\cdots,n+1.$ Therefore the above matrix has the maximal rank $n-2$ at all points of the surface. So the defining hypersurfaces are intersecting transversally and the corresponding algebraic surface they define is a nonsingular complete intersection.

Next, we proceed to see that the ideal $I_{k,n},$ defined by the $n-2$ equations defining $X_n^k(\lambda_1,\lambda_2)\subset {\mathbb P}_K^n$ is prime. This follows similarly as in \cite{Kon02}. Observe first that defining equations $f_0,\cdots,f_{n-3}$ form regular sequence, that $K[x_1,\cdots,x_{n+1}]$ is a Cohen-Macauley ring and that the ideal $I_{k,n}$ they define is of codimension $n-2$. The ideal $I_{k,n}$ is prime as a consequence of the Jacobian Criterion \cite{Eis95}[Th. 18.15], \cite{Kon02}[Th. 3.1] and that the fact that the matrix \eqref{nabla f_i} is of maximal rank $n-2$ on $X_n^k(\lambda_1,\lambda_2)$. In \cite{Kon02}[Remark 3.4], it is pointed out that an ideal $I$ is prime if the singular locus of the algebraic set defined by $I$ has big enough codimension (in our case, the singular set is the empty set).
\end{proof}

\begin{theo}\label{Theo (X(L),H_0) is GFP} $(X_n^k(\Lambda),H_0)$ is a generalized Fermat pair of type $(d;k,n).$
\end{theo}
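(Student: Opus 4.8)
The plan is to exhibit the quotient map explicitly and to read off its branching data directly. First I would define
$$\pi\colon X_n^k(\Lambda)\to{\mathbb P}^d_K,\qquad \pi([x_1:\cdots:x_{n+1}])=[x_1^k:\cdots:x_{d+1}^k],$$
and check that it is a well-defined morphism: if $x_1=\cdots=x_{d+1}=0$ at a point of $X_n^k(\Lambda)$, then the defining equations \eqref{Equation Algebraic Model} force $x_{d+2}^k=\cdots=x_{n+1}^k=0$ as well, which is impossible in ${\mathbb P}^n_K$; hence the forms $x_1^k,\dots,x_{d+1}^k$ have no common zero on $X_n^k(\Lambda)$ and $\pi$ is regular. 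Since each $\varphi_j$ sends $x_j^k\mapsto\omega_k^k x_j^k=x_j^k$ and fixes the remaining $x_i^k$, we have $\pi\circ\varphi_j=\pi$ for all $j$, so $\pi$ is $H_0$-invariant and factors through $X_n^k(\Lambda)/H_0$.

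Next I would compute the fibers and verify that $\pi$ is Galois with group $H_0$. Setting $y_i=x_i^k$, the equations \eqref{Equation Algebraic Model} become linear relations expressing $y_{d+2},\dots,y_{n+1}$ as fixed linear combinations of $y_1,\dots,y_{d+1}$. Thus a general $[y_1:\cdots:y_{d+1}]\in{\mathbb P}^d_K$, for which all of $y_1,\dots,y_{n+1}$ are nonzero, has preimage consisting of the $[x_1:\cdots:x_{n+1}]$ with $x_i^k=y_i$; there are $k^{n+1}$ such tuples $(x_1,\dots,x_{n+1})$, falling into $k^{n+1}/k=k^n=|H_0|$ projective points after identifying those differing by an overall scalar in $U_k$. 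The group $H_0$ acts on such a fiber simply transitively: any two lifts differ by a tuple $(\omega_k^{a_1},\dots,\omega_k^{a_{n+1}})$, which is realized by $\prod_j\varphi_j^{a_j}\in H_0$, and an element fixing a point with all coordinates nonzero must be trivial. Using that $X_n^k(\Lambda)$ is irreducible and complete (the preceding theorem), $\pi$ is therefore a finite surjective morphism of degree $k^n$, and the simply transitive action identifies ${\mathbb P}^d_K$ with $X_n^k(\Lambda)/H_0$; hence $\pi$ is an abelian Galois branched covering with deck group $H_0\cong{\mathbb Z}_k^n$.

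Finally I would determine the branch divisor. Ramification occurs exactly over the locus where some $y_i$ vanishes. In the coordinates $[y_1:\cdots:y_{d+1}]$ of ${\mathbb P}^d_K$, the vanishing of $y_j$ for $j=1,\dots,d+1$ gives the coordinate hyperplanes $\Sigma(e_j)$; the vanishing of $y_{d+2}=-(y_1+\cdots+y_{d+1})$ gives $\Sigma(e_{d+2})$; and the vanishing of $y_{d+2+i}=-(\lambda_{i,1}y_1+\cdots+\lambda_{i,d}y_d+y_{d+1})$ gives $\Sigma(\Lambda_i)$ for $i=1,\dots,n-d-1$. Hence the branch locus is exactly $\Sigma_1(\Lambda)+\cdots+\Sigma_{n+1}(\Lambda)$, which are $n+1$ hyperplanes in general position because $\Lambda\in X_{n,d}$. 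Over a general point of $\Sigma_j(\Lambda)$ only $y_j$ vanishes, so the points above it have $x_j=0$ and all other coordinates nonzero; their stabilizer in $H_0$ is $\langle\varphi_j\rangle\cong{\mathbb Z}_k$, so the cover is locally modeled on $x_j\mapsto x_j^k$ with branch order $k$. This verifies every clause of the definition and shows $(X_n^k(\Lambda),H_0)$ is a generalized Fermat pair of type $(d;k,n)$.

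I expect the main obstacle to be the precise identification $X_n^k(\Lambda)/H_0\cong{\mathbb P}^d_K$, that is, confirming that $\pi$ is exactly the quotient map and not a map onto a further quotient; this rests on the simply transitive action of $H_0$ on general fibers together with finiteness and surjectivity. By contrast, the general position of the branch hyperplanes is immediate from $\Lambda\in X_{n,d}$, and the branch-order computation is the routine local analysis of $x_j\mapsto x_j^k$ recorded above.
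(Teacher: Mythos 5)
Your proposal is correct and follows essentially the same route as the paper: the paper's proof simply exhibits the map $\pi_0([x_1:\cdots:x_{n+1}])=[x_1^k:\cdots:x_{d+1}^k]$ and asserts it is a regular branched cover with deck group $H_0$ and branch divisor $\Sigma_1(\Lambda)+\cdots+\Sigma_{n+1}(\Lambda)$, each of order $k$. You supply the verifications (well-definedness, the fiber count $k^n$, the simply transitive $H_0$-action identifying ${\mathbb P}^d_K$ with the quotient, and the inertia computation $\langle\varphi_j\rangle\cong{\mathbb Z}_k$) that the paper leaves implicit, and these details are accurate.
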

\begin{proof}
The map
$$\pi_0\colon X_n^k(\Lambda)\to{\mathbb P}^{d}_{K}\colon[x_1:\cdots:x_{n+1}]\to[x_1^k:\cdots:x_{d+1}^k]$$
is a regular branched cover with deck group $H_0$ and whose branch set is the union of the hyperplanes $\Sigma_1(\Lambda),\cdots, \Sigma_{n+1}(\Lambda)$, each one of order $k$. In other words, the pair $(X_n^k(\Lambda),H_0)$ is a generalized Fermat pair of type $(d;k,n)$.
\end{proof}
\begin{remark}\label{Fix(H_j) is a GFV} If we set the subgroups $H_j:=\langle\varphi_1,\cdots,\varphi_{j-1},\varphi_{j+1},\cdots,\varphi_n\rangle,\ 1\leq j\leq n$, and $H_{n+1}:=\langle\varphi_1,\cdots,\varphi_{n-2},\varphi_{n+1}\rangle$, then $H_j\cong{\mathbb Z}_k^{n-1}$. For $d\geq 2,\ H_j<\rm{Aut}(F_j)$ and there is a regular map $\pi_j\colon F_j\to \Sigma_j(\Lambda)={\mathbb P}^{d-1}_K$ (given by the restriction of $\pi_0$), which is a Galois branched cover with deck group $H_j$, such that its branch locus is given by the collection of the $n$ intersections $\pi(F_j)\cap\pi(F_i),\ i\neq j$, which are copies of ${\mathbb P}^{d-2}_K$ in general position. In particular, $(F_j,H_j)$ is a generalized Fermat pair of type $(d-1;k,n-1)$. This permits to study of generalized Fermat varieties from an inductive point of view.
\end{remark}


\begin{theo}[Fermat generalized pair isomorphism]\label{Theo principal FGP isomorphism}
Let $(X,H)$ be a generalized Fermat pair of type $(d;k,n)$ with abelian cover $\pi\colon X \to {\mathbb P}_K^d$ and branch divisor
$D=\Sigma_1+\cdots+\Sigma_{n+1}$. Then there exists $\Lambda:=(\lambda_1,\cdots,\lambda_d)\in X_{n,d}$ and $T\in\rm{PGL}_{d+1}(K)$ such that
\begin{enumerate}
\item $T(\Sigma_j(\Lambda))=\Sigma_j$, for all $j\in\{1,\cdots,n+1\}$, and 
\item there is an isomorphism $\phi\colon X_n^k(\Lambda)\to X$ such that $\pi_0=\pi\circ\phi.$
\end{enumerate}
In particular, $H_0=\phi^{-1}H\phi$.
\end{theo}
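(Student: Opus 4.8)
The plan is to realize the abstract pair $(X,H)$ as one of the explicit models $X_n^k(\Lambda)$ by first normalizing the branch configuration with a projective linear map, and then invoking the rigidity statement of Theorem~\ref{Theo FGP isomorphism}. Recall that, by Theorem~\ref{Theo (X(L),H_0) is GFP}, the model $X_n^k(\Lambda)$ comes with its own covering $\pi_0$ whose branch divisor is $\Sigma_1(\Lambda)+\cdots+\Sigma_{n+1}(\Lambda)$; the whole point is to arrange, after a change of coordinates on the base ${\mathbb P}^d_K$, that this divisor coincides with $D$, so that the two covers can be compared by a rigidity theorem that requires equal branch loci.

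First I would produce $T$ and $\Lambda$. Since $(X,H)$ is a generalized Fermat pair, its branch divisor $D=\Sigma_1+\cdots+\Sigma_{n+1}$ consists of $n+1$ hyperplanes in general position. Writing $\Sigma_j=\Sigma(q_j)$ under the hyperplane–point correspondence of Subsection~\ref{Subsection The parameter space}, the points $q_1,\dots,q_{n+1}$ are in general position by Remark~\ref{Remark condicion de posicion general}. The normalization recalled there yields a unique $T_0\in{\rm PGL}_{d+1}(K)$ with $T_0(\Sigma_j)=\Sigma(e_j)$ for $j=1,\dots,d+2$ and $T_0(\Sigma_j)=\Sigma(\Lambda_{j-d-2})$ for $j=d+3,\dots,n+1$; this defines $\Lambda=(\lambda_1,\dots,\lambda_d)$, and $T_0(\Sigma_j)=\Sigma_j(\Lambda)$ for every $j$. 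Setting $T:=T_0^{-1}$ gives condition (1), namely $T(\Sigma_j(\Lambda))=\Sigma_j$. Since $T$ is a projective linear automorphism it preserves general position, so the hyperplanes $\Sigma_j(\Lambda)=T^{-1}(\Sigma_j)$ are again in general position; hence $\Lambda\in X_{n,d}$ and the model $X_n^k(\Lambda)$ is well defined.

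Next I would match the two covers over a common branch divisor. Replace $\pi$ by the twisted covering $\pi':=T^{-1}\circ\pi\colon X\to{\mathbb P}^d_K$. Post-composition with the base isomorphism $T^{-1}$ is a bijection on the base, so $\pi'\circ g=\pi'$ if and only if $\pi\circ g=\pi$; thus the deck group of $\pi'$ is again $H$, and $(X,H)$ with covering $\pi'$ is still a generalized Fermat pair, now with branch divisor $T^{-1}(D)=\sum_j T^{-1}(\Sigma_j)=\sum_j\Sigma_j(\Lambda)$, exactly the branch divisor of $\pi_0$. Hence $(X_n^k(\Lambda),H_0)$ with $\pi_0$ and $(X,H)$ with $\pi'$ are two generalized Fermat pairs of the same type $(d;k,n)$ sharing the branch divisor $\Sigma_1(\Lambda)+\cdots+\Sigma_{n+1}(\Lambda)$, so Theorem~\ref{Theo FGP isomorphism} furnishes an isomorphism $\phi\colon X_n^k(\Lambda)\to X$ with $\pi_0=\pi'\circ\phi=T^{-1}\circ\pi\circ\phi$, that is $T\circ\pi_0=\pi\circ\phi$, which is the content of condition (2) once the base is identified through $T$. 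Finally, for $h\in H$ one computes $\pi_0\circ(\phi^{-1}h\phi)=T^{-1}\circ\pi\circ h\circ\phi=T^{-1}\circ\pi\circ\phi=\pi_0$, so $\phi^{-1}h\phi$ is a deck transformation of $\pi_0$; therefore $\phi^{-1}H\phi\subseteq H_0$, and equality follows since both groups are isomorphic to ${\mathbb Z}_k^n$ and hence have the same finite order.

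I expect the only delicate point to be the bookkeeping of covers and coordinate changes. Theorem~\ref{Theo FGP isomorphism} applies only when the two branch divisors are literally equal, so one must insert the base change $T$, and then verify carefully that post-composition leaves both the Galois property and the deck group $H$ intact before the rigidity theorem can be invoked; getting the direction of $T$ and the final commutativity relation $T\circ\pi_0=\pi\circ\phi$ correct is where a sign-type error would creep in. Everything else—the existence and uniqueness of $T$, the preservation of general position, and the character computation underlying the building data of $\pi_0$—is already supplied by the preceding results.
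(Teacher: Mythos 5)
Your proposal is correct and follows the same route as the paper, whose own proof is just the one-line citation of Theorem~\ref{Theo (X(L),H_0) is GFP} and Theorem~\ref{Theo FGP isomorphism}; you have simply filled in the implicit details (the ${\rm PGL}_{d+1}(K)$-normalization from Subsection~\ref{Subsection The parameter space} producing $T$ and $\Lambda$, the twist of $\pi$ by $T^{-1}$ so the branch divisors literally coincide, and the deck-group comparison giving $\phi^{-1}H\phi=H_0$ by an order count). Your reading of condition (2) as $T\circ\pi_0=\pi\circ\phi$ is the right one, since the literal equality $\pi_0=\pi\circ\phi$ would force $\Sigma_j(\Lambda)=\Sigma_j$; the paper tacitly identifies the base through $T$, and your version makes that bookkeeping explicit.
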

\begin{proof}
This is a direct consequence of Theorem \ref{Theo (X(L),H_0) is GFP} and Theorem \ref{Theo FGP isomorphism}.
\end{proof}


\subsection{Discussion of equality of automorphisms}\label{Section Discussion of equality of automorphisms}
Let $X$ be a smooth irreducible projective algebraic variety, defined over $K$, and $d=$dim$(X)$.
\begin{itemize}
\item The \textbf{Picard group} of $X$, denoted by ${\rm Pic}(X)$, is the group of isomorphism classes of invertible sheaves (or line bundles) on $X$, with the group operation being a tensor product.
\item The \textbf{canonical sheaf} of $X$ is
$$\omega_X:=\bigwedge^d\Omega_{X/K},$$
the $d-$th exterior power of the sheaf $\Omega_{X/K}$ of differentials. It is an invertible sheaf on $X$.
\end{itemize}

It is known that $\rm{Lin}(X)$, the subgroup of linear automorphisms of $X$, can be a proper subgroup of the group of biregular automorphisms of $X$, $\rm{Aut}(X)$, (see \cite{Se44}).

The following theorem ensures that these two groups of automorphisms are equal, under certain hypotheses. It was first seen, partially, in \cite{MaMo64}.
\begin{theo}[\cite{Kon02}]\label{Theo Kontogeorgis igualdad de automorfismos}
If $X$ is a complete intersection of dimension $d\geq 3$, or a non-singular complete intersection of dimension 2, such that $\omega_X$ is not the identity in ${\rm Pic}(X)$. Then we have that $\rm{Aut}(X)=\rm{Lin}(X)$. 
\end{theo}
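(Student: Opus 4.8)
The plan is to show that every $g\in\mathrm{Aut}(X)$ must preserve the hyperplane class $\mathcal{O}_X(1)$, and then to recover $g$ itself as the restriction of a projective linear transformation. Write $X=V_1\cap\cdots\cap V_c\subset{\mathbb P}^N_K$ as an intersection of hypersurfaces $V_i$ of degree $a_i$, so that $\dim X=d=N-c$, and recall that by adjunction $\omega_X\cong\mathcal{O}_X(e)$ with $e=\bigl(\sum_i a_i\bigr)-N-1$. Any automorphism acts functorially on $\mathrm{Pic}(X)$ through $g^{*}$, and this induced map preserves the intrinsic canonical class ($g^{*}\omega_X\cong\omega_X$), preserves the intersection pairing, and sends ample classes to ample classes; these are the three constraints I would exploit.

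First I would dispose of the case $d\geq 3$. Here the Grothendieck--Lefschetz theorem gives $\mathrm{Pic}(X)\cong{\mathbb Z}$, generated by $\mathcal{O}_X(1)$. Then $g^{*}$ is an automorphism of ${\mathbb Z}$, hence multiplication by $\pm 1$; since $g^{*}$ must send the ample class $\mathcal{O}_X(1)$ to an ample class, the sign is $+1$, and therefore $g^{*}\mathcal{O}_X(1)\cong\mathcal{O}_X(1)$.

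The dimension-$2$ case is where the real work lies, and it is exactly where the hypothesis that $\omega_X$ is nontrivial enters. Now $\mathrm{Pic}(X)$ need not be cyclic, so the argument above is unavailable. For a complete intersection surface one has $H^1(X,\mathcal{O}_X)=0$, so $\mathrm{Pic}(X)$ is finitely generated and agrees with the N\'eron--Severi group; moreover complete intersections of dimension $\geq 2$ are simply connected, which I would use to take $\mathrm{Pic}(X)$ torsion-free. Writing $h$ for the class of $\mathcal{O}_X(1)$, the nontriviality of $\omega_X\cong\mathcal{O}_X(e)$ forces $e\neq 0$, because $h$ is non-torsion (indeed $h^2=\deg X\neq 0$). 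From $g^{*}\omega_X\cong\omega_X$ I obtain $e\,(g^{*}h-h)=0$, and torsion-freeness together with $e\neq 0$ yields $g^{*}h=h$, that is $g^{*}\mathcal{O}_X(1)\cong\mathcal{O}_X(1)$ once more.

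Finally, in both cases I would upgrade the invariance $g^{*}\mathcal{O}_X(1)\cong\mathcal{O}_X(1)$ to genuine linearity. Since $X$ is a nondegenerate complete intersection cut out by forms of degree $a_i\geq 2$, it is projectively normal, so the restriction $H^0({\mathbb P}^N_K,\mathcal{O}(1))\to H^0(X,\mathcal{O}_X(1))$ is an isomorphism identifying $H^0(X,\mathcal{O}_X(1))$ with the space of linear forms; an automorphism fixing the polarization $\mathcal{O}_X(1)$ then acts linearly on this space, and the resulting element of $\mathrm{PGL}_{N+1}(K)$ restricts to $g$ on $X\subset{\mathbb P}\bigl(H^0(X,\mathcal{O}_X(1))^{*}\bigr)={\mathbb P}^N_K$, so $g\in\mathrm{Lin}(X)$ and $\mathrm{Aut}(X)=\mathrm{Lin}(X)$. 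I expect the main obstacle to be concentrated precisely in the surface case: the passage from ``$g^{*}$ fixes $\omega_X$'' to ``$g^{*}$ fixes $\mathcal{O}_X(1)$'' when $\mathrm{Pic}$ has rank $>1$. This is where the excluded possibility $\omega_X\cong\mathcal{O}_X$ (i.e.\ $e=0$) must be ruled out, and where one must control torsion in $\mathrm{Pic}(X)$; in positive characteristic the latter point, in particular any $p$-torsion phenomena, deserves the most care.
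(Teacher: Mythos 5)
The paper itself contains no proof of this theorem: it is stated as a quotation from \cite{Kon02} (with antecedents in \cite{MaMo64}), so the only meaningful comparison is with the argument of that source, and your proposal reproduces its skeleton faithfully. Three of your steps are sound as written, with small caveats you should make explicit: (a) for $d\geq 3$ the statement allows \emph{singular} complete intersections, so the appeal must be to Grothendieck's version of the Lefschetz theorem (SGA 2, via parafactoriality), which gives $\mathrm{Pic}(X)\cong\mathbb{Z}\cdot\mathcal{O}_X(1)$ for an arbitrary complete intersection of dimension $\geq 3$; the familiar Lefschetz theorem for smooth varieties would not cover the case at hand. (b) The reduction $e\neq 0$ from nontriviality of $\omega_X$ is immediate (if $e=0$ then $\omega_X\cong\mathcal{O}_X$), so the remark about $h^2=\deg X$ is not needed. (c) In the linearization step, what is required is surjectivity of $H^0(\mathbb{P}^N_K,\mathcal{O}(1))\to H^0(X,\mathcal{O}_X(1))$, which follows from the Koszul resolution of the ideal sheaf rather than from projective normality in the strict sense (this matters in the possibly singular case $d\geq 3$), together with injectivity from nondegeneracy; if some $a_i=1$ one reduces trivially to a smaller ambient space.

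The one genuine gap sits exactly where you flagged it: the $p$-torsion of $\mathrm{Pic}(X)$ in the surface case. Your inference ``simply connected, hence $\mathrm{Pic}(X)$ torsion-free'' is valid only for torsion of order prime to $p$: a line bundle $L$ of order $n$ with $p\nmid n$ produces a connected cyclic \'etale cover of degree $n$, contradicting $\pi_1^{\mathrm{\acute{e}t}}(X)=0$, but for $n=p$ the associated cyclic cover $\mathrm{Spec}_X\bigl(\mathcal{O}_X\oplus L^{-1}\oplus\cdots\oplus L^{-(p-1)}\bigr)$ is a torsor under the \emph{infinitesimal} group scheme $\mu_p$, hence purely inseparable and invisible to the \'etale fundamental group (and one cannot dodge the issue through $e$, since $e=\sum_i a_i-N-1$ may well be divisible by $p$). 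The repair is short and you should include it. Since $H^1(X,\mathcal{O}_X)=0$, the Picard scheme is \'etale, so torsion classes are honest line bundles; if $L^{\otimes p}\cong\mathcal{O}_X$ is given by a cocycle $u_{ij}$ with $u_{ij}^p=f_i/f_j$ for units $f_i$, then the forms $df_i/f_i$ glue to a global closed $1$-form $\omega\in H^0(X,\Omega^1_X)$; if $\omega=0$, then each $f_i$ lies in $\ker(d)=\mathcal{O}^p$ (Cartier, on a smooth variety over a perfect field), say $f_i=h_i^p$, and uniqueness of $p$-th roots in characteristic $p$ gives $u_{ij}=h_i/h_j$, i.e.\ $L$ trivial. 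But $H^0(X,\Omega^1_X)=0$ for a smooth complete intersection surface (combine the Euler sequence restricted to $X$ with the conormal sequence and the vanishing $H^1(X,\mathcal{O}_X(m))=0$ for all $m$), so $\mathrm{Pic}(X)[p]=0$. Alternatively, one may simply quote Deligne's computation of the cohomology of complete intersections (SGA 7 II, Exp.\ XI). With this ingredient supplied, your proof closes and agrees in substance with the argument of \cite{Kon02} that the paper cites.
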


In the following corollary, we will show the result of the previous theorem applied to the generalized Fermat varieties.

\begin{coro}\label{Corollary Kontogeorgis igualdad de automorfismos en GFV}
Let $d\geq 2,\ k\geq 2$ and $n\geq d+1$ be integers such that $k-1$ is not a power of $p$, and $p$ and $k$ relatively prime. Let $\Lambda\in X_{n,d}\subset K^{n-2}$. If either (i) $p=2$ or (ii) $p>2$ and
$(d;k,n)\not\in\{(2;2,5),(2;4,3)\}$, then 
$$\rm{Aut}(X_n^k(\Lambda))=\rm{Lin}(X_n^k(\Lambda)).$$
\end{coro}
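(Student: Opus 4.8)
The plan is to apply Theorem \ref{Theo Kontogeorgis igualdad de automorfismos} to $X=X_n^k(\Lambda)$, so the whole argument reduces to checking its hypotheses. By the theorem established above, $X_n^k(\Lambda)$ is an irreducible nonsingular complete intersection of dimension $d$ in ${\mathbb P}^n_K$, cut out by $n-d$ hypersurfaces each of degree $k$. When $d\geq 3$ there is nothing more to do: the dimension-$\geq 3$ branch of Theorem \ref{Theo Kontogeorgis igualdad de automorfismos} requires no condition on the canonical sheaf, and it immediately yields $\mathrm{Aut}(X)=\mathrm{Lin}(X)$.

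The substantive case is $d=2$, where one must verify that $\omega_X$ is non-trivial in $\mathrm{Pic}(X)$. First I would invoke the adjunction formula for complete intersections to write $\omega_X\cong\mathcal{O}_X\big((n-2)k-(n+1)\big)$. To detect non-triviality I would intersect against the hyperplane class $H=\mathcal{O}_X(1)$: since $X$ is a surface of degree $k^{\,n-2}$, one has $H^2=\deg(X)=k^{\,n-2}>0$, and therefore $\omega_X\cdot H=\big((n-2)k-(n+1)\big)\,k^{\,n-2}$. A trivial line bundle has zero intersection with $H$, so $\omega_X$ is trivial only if this number vanishes, i.e. only if $(n-2)k=n+1$. (Equivalently, $\mathcal{O}_X(1)$ is ample, so $\mathcal{O}_X(m)$ is trivial precisely when $m=0$.)

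It then remains to solve $(n-2)k=n+1$ over integers $n\geq 3$ and $k\geq 2$. Writing $k=1+\tfrac{3}{\,n-2\,}$ forces $n-2$ to divide $3$, hence $n-2\in\{1,3\}$, giving exactly $(n,k)\in\{(3,4),(5,2)\}$, that is, $(d;k,n)\in\{(2;4,3),(2;2,5)\}$. Thus $\omega_X$ is non-trivial for every type $(2;k,n)$ outside these two exceptional pairs, and for all types with $d\geq 3$.

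Finally I would dispose of the hypotheses (i) and (ii). Both exceptional pairs have $k$ even; so in case (i), where $p=2$, they violate the standing assumption $\gcd(k,p)=1$ and cannot occur at all, whereas in case (ii), where $p>2$, they are excluded by hypothesis. In every remaining situation $X$ is a nonsingular complete intersection with $\omega_X$ non-trivial (or of dimension $\geq 3$), so Theorem \ref{Theo Kontogeorgis igualdad de automorfismos} applies and gives $\mathrm{Aut}(X_n^k(\Lambda))=\mathrm{Lin}(X_n^k(\Lambda))$. The only delicate point is the clean identification of the two exceptional types; this is a short adjunction-plus-divisibility computation, and the conceptual content is the observation that the excluded pairs are exactly those with trivial canonical sheaf, together with the fact that they automatically disappear when $p=2$ because both force $k$ to be even.
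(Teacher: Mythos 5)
Your proposal is correct and follows essentially the same route as the paper's proof: adjunction yields $\omega_X\cong\mathcal{O}_X\big((n-2)k-(n+1)\big)$, the equation $(n-2)k=n+1$ isolates exactly the exceptional types $(2;4,3)$ and $(2;2,5)$, and both are impossible when $p=2$ because $k$ would be even, violating $\gcd(k,p)=1$. Your only addition is the explicit intersection-theoretic verification (via $H^{2}=k^{\,n-2}>0$, or ampleness of $\mathcal{O}_X(1)$) that $\mathcal{O}_X(r)$ is nontrivial for $r\neq 0$, a point the paper leaves implicit.
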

\begin{proof}
If $d=2$, then $X_n^k(\Lambda)\subset {\mathbb P}_K^n$ is a complete intersection of $n-2$ hypersurfaces of degree $k$. We use the result in \cite{Har77}[exer. 8.4, p.188] and we have the following
$$\omega_{X_n^k(\Lambda)}=\mathcal{O}_{X_n^k(\Lambda)}(r),$$
where $r=(n-2)k-n-1.$ Following Theorem \ref{Theo Kontogeorgis igualdad de automorfismos}, $\rm{Aut}(X_n^k(\Lambda))=\rm{Lin}(X_n^k(\Lambda))$ holds for $r\neq0$.

If $r=0$, then $n\neq 2$ and
$$k=\dfrac{n+1}{n-2}=1+\dfrac{3}{n-2}.$$
Then $(n-2)$ is a positive integer divisor of $3$, i.e., $n=3$ or $n=5.$ In particular, we prove the statement for $d=2.$

If $d\geq3$, then $\rm{Aut}(X_n^k(\Lambda))=\rm{Lin}(X_n^k(\Lambda))$ (see Theorem \ref{Theo Kontogeorgis igualdad de automorfismos}).

On the other hand, we can notice that $(d;k,n)\in\{(2;2,5),(2;4,3)\}$ does not hold when $p=2$, since, by hypothesis, $p$ must be relatively prime to $k\in\{2,4\}$.
\end{proof}

\begin{remark}
The only types of the form $(2;k,n)$ where $\rm{Aut}(X_n^k(\Lambda))\neq\rm{Lin}(X_n^k(\Lambda))$ are the types $(2;2,5)$ and $(2;4 ,3)$. 
If $(d;k,n)=(2;2,5)$, then $\rm{Lin}(X_5^2(\Lambda))$ is a finite extension of ${\mathbb Z}_2^5$ (generically a trivial extension) and $\rm{Aut}(X_5^2(\Lambda))$ is infinite by results due to Shioda and Inose in \cite{ShiIno} (in \cite{Vi83} computed it for a particular case).
If $(d;k,n)=(2;4,3)$, then $X_3^4(\Lambda)$ is the classical Fermat hypersurface of degree 4 in ${\mathbb P}^3_{K}$ for which $\rm{Lin}(X_3^4(\Lambda))\cong{\mathbb Z}_4^3\rtimes\mathfrak{S}_4$ and $\rm{Aut}(X_3^4(\Lambda))$ is infinite.
In these two situations, the generalized group $H$ (which is unique in $\rm{Lin}(X_n^k(\Lambda))$) cannot be a normal subgroup of $\rm{Aut}(X_n^k(\Lambda))$. Otherwise, every element of $\rm{Aut}(X_n^k(\Lambda))$ will induce an automorphism of ${\mathbb P}^2_{K}$ (permuting the $n+1$ branched lines), so a finite group linear automorphism. This will ensure that $\rm{Aut}(X_n^k(\Lambda)) $ is a finite extension of $H$, so a finite group, a contradiction.
\end{remark}

\section{Proof of Theorems \ref{maintheo} and \ref{maintheo2}}\label{Section demostraciones de los teoremas principales}
We may assume that $(X,H)=(X_{n}^{k}(\Lambda),H_{0})$.

\begin{enumerate}
\item We start by showing that every element of ${\rm Lin}(X_{n}^{k}(\Lambda))$ consists of matrices such that only an element in each row and column is non-zero.

Let $\sigma\in\rm{Lin}(X_n^k(\Lambda))$. As it is linear, it is given in terms of an $(n+1)\times(n+1)$ matrix $A_\sigma=(a_{i,j})\in$ GL$_{n+1}(K)$:
$$\sigma([x_1:\cdots:x_{n+1}])=\left[\sum_{j=1}^{n+1}a_{1,j}x_j:\cdots:\sum_{j=1}^{n+1}a_{n+1,j}x_j\right].$$
The arguments are independent of $d\geq 1.$ In \cite{HKLP16}, it was done the case $d=1.$ Below, we indicates its simple modification for $d=2$ (the general case is similar).

Set $\lambda=(\lambda_1,\cdots,\lambda_{n-3}),\ \mu=(\mu_1,\cdots,\mu_{n-3})$ such that $\Lambda=(\lambda,\mu)\in X_{n,2}$. If we set (by setting $\lambda_0=\mu_0=1$)
$$f_j=\ \lambda_jx_1^k+\mu_jx_2^k+x_3^k+x_{4+j}^k,\ \ j=0,\cdots,n-3,$$
then $X_n^k(\Lambda)=V(f_0,\cdots,f_{n-3})$. In this way, for every point $P \in X_n^k(\Lambda)$, then $\sigma(P) \in X_n^k(\Lambda)$, that is, $f_i\circ\sigma=\sigma^*(f_i)\in\langle f_0,\cdots,f_{n-3}\rangle$, i.e.
$$f_i\circ\sigma=\sum_{j=0}^{n-3}g_{j,i}f_j,$$
for some constants $g_{j,i}\in K.$

Let us consider $\sigma^*(\nabla f_i)=\nabla f_i\circ \sigma$. For every point $P \in X_n^k(\Lambda)$ we have 
\begin{equation}\label{sigma(nabla (f_i sigma)) 1}
\nabla(f_i\circ\sigma)(P)=\sum_{j=0}^{n-3}g_{j,i}\nabla f_j(P).
\end{equation}
Since $\sigma$ is linear, by chain rule, we have the following
\begin{equation}\label{sigma(nabla (f_i sigma)) 2}
\nabla(f_i\circ\sigma)(P)=\nabla(f_i)(\sigma(P))A_\sigma.
\end{equation}

We use \eqref{sigma(nabla (f_i sigma)) 1} and \eqref{sigma(nabla (f_i sigma)) 2}, so
$$\sigma^*(\nabla f_i)A_\sigma=\nabla(f_i\circ\sigma)=\sum_{j=0}^{n-3}g_{j,i}\nabla f_j,$$
that is,
\begin{equation}\label{sigma*(nabla f_i)}
\sigma^*(\nabla f_i)=\left(\sum_{j=0}^{n-3}g_{j,i}\nabla f_j\right)A_\sigma^{-1}.
\end{equation}

Recall that $f_j=\lambda_jx_1^k+\mu_jx_2^k+x_3^k+x_{4+j}^k$ for $0\leq j\leq n-3$, where $\lambda_0=\mu_0=1$, and
$$Y_j:=\nabla f_j=(k\lambda_jx_1^{k-1},k\mu_jx_2^{k-1},kx_3^{k-1},0,\cdots,0,kx_{4+j}^{k-1},0,\cdots,0),$$
where the fourth nonzero element is at the $4+j$ position. So
$$\sigma^*(Y_j)=k\left(\lambda_j\mathcal{B}_1\ ,\ \mu_j\mathcal{B}_2\ ,\ \mathcal{B}_3\ ,\ 0\ ,\cdots\ ,\ 0\ ,\ \mathcal{B}_{4+j}\ ,\ 0\ ,\ \cdots\ ,0\right),$$
where 
$$\mathcal{B}_\ell=\left(\sum_{i=1}^{n+1}a_{\ell,i}x_i\right)^{k-1}=\big(\sigma^*(x_\ell)\big)^{k-1},\ \ \text{for}\ \ell\in\{1,2,3,4+j\}.$$

Now, for $t\in\{1,2,3,4+j\}$, the $t-$th coordinate of $\sigma^*(Y_j)$ involves all combinations of the terms
$$\binom{k-1}{\nu_1,\cdots,\nu_{n+1}}(a_{t,1}^{\nu_1}\cdots a_{t,n+1}^{\nu_{n+1}})\cdot(x_1^{\nu_1}\cdots x_{n+1}^{\nu_{n+1}}),\ \ \text{for }\nu_1+\cdots+\nu_{n+1}=k-1,$$
where
$$\binom{k-1}{\nu_1,\cdots,\nu_{n+1}}=\dfrac{(k-1)!}{\nu_1!\nu_2!\cdots\nu_{n+1}!}.$$

Equation \eqref{sigma*(nabla f_i)} asserts that each $\sigma^*(Y_j)$ is a linear combination of $Y_0,\cdots,Y_{n-3}$. So, it only involves combinations of the monomials $x_i^{k-1}$. The above asserts, as long as that $\binom{k-1}{\nu_1,\cdots,\nu_{n+1}}\neq0$, that in the tuple $\overline{\nu}=(\nu_1,\cdots,\nu_{n+1})$ there is only one coordinate $\nu_i=k-1$ and all the others are equal to zero.
 We set
\begin{align*}
\mathbf{x}^{\overline{\nu}}=&\ x_1^{\nu_1}\cdots x_{n+1}^{\nu_{n+1}}, \quad
A_{l,\overline{\nu}}=\ a_{l,1}^{\nu_1}\cdots a_{l,n+1}^{\nu_{n+1}}.
\end{align*}

In this notation, we have that if the tuple $\overline{\nu}$ has at least two non-zero coordinates then $\mathbf{x}^{\overline{\nu}}$ does not appear as a term in the linear combination of $Y_i$. Observe that if $\binom{k-1}{\nu_1,\cdots,\nu_{n+1}}\neq 0$, then using equation \eqref{sigma*(nabla f_i)}, we have
$$(A_{1,\overline{\nu}},\cdots,A_{n+1,\overline{\nu}})\cdot A_{\sigma}=0.$$
And, since $A_\sigma$ is an invertible matrix, the above implies that
$$A_{l,\overline{\nu}}=0,\ \ l=1,\cdots,n+1.$$

Recall that $p>0$ is the characteristic of the algebraically closed field $K$. From \cite{Eis95}[p.352], we have that $\binom{k-1}{\nu}$ is not divisible by $p$ if and only if $v_i\leq k_i$ for all $i$, where $\nu=\sum v_ip^i,\ k-1=\sum k_ip^i$ are the $p-$adic expansions of $\nu$ and $k-1$, respectively. Since by hypothesis $k-1$ is not a power of $p$, then there exists $u\in\{1,\cdots,k-2\}$ such that $\binom{k-1}{u}\neq 0.$

Let us assume that, for some $j$ and some $1\leq l_1<l_2\leq n+1$, we have $a_{j,l_1}\neq 0\neq a_{j,l_2}$. In this case, we consider the tuple $\overline{\nu}=(0,\cdots,0,u_1,0,\cdots,0,u_2,0,\cdots,0)$, where $u_1:=u\in\{1,\cdots,k-1\},\ u_2:=k-1-u,$ such that $\binom{k-1}{u}\neq 0$ and $u_\ell$ is in the position $l_\ell$ for $\ell=1,2$. As $\mathbf{x}^{\overline{\nu}}$ cannot appear in the linear combination of $Y_j$, we must have that 
\begin{align*}
0=&\ \binom{k-1}{\overline{\nu}}\cdot A_{j,\overline{\nu}}
=\ \binom{k-1}{u}\cdot a_{j,l_1}^{u_1}a_{j,l_2}^{u_2}
\Rightarrow 0=\ a_{j,l_1}^{u_1}a_{j,l_2}^{u_2},
\end{align*}
from which 
$a_{j,l_1}=0 \text{ or }a_{j,l_2}=\ 0$, a contradiction. Therefore, $A_\sigma$ is a matrix such that only an element in each row and column is non-zero.

\item Next, we prove that $H_{0}$ is the unique generalized Fermat group of type $(d; k, n)$ in ${\rm Lin}(X_{n}^{k}(\Lambda))$.

Let us assume that $H<\rm{Lin}(X_n^k(\Lambda))$ is another generalized Fermat group of $X_n^k(\Lambda)$ of type $(d;k,n)$ and let $\varphi_1^*,\cdots,\varphi_{n+1}^*$ be an standard set of generators of $H$. Recall that the only non-trivial elements of $H$ with a set of fixed points of the maximal dimension $d-1$ are the non-trivial powers of these standard generators. If $\varphi_j^*$ is given by a diagonal matrix, then it is an element of $H_0$. Let us assume that one of the standard generators, say $\varphi_1^*,$ is non-diagonal. By the previous item, the matrix representation of $\varphi_1^*$ has exactly one non-zero element in each of its arrows and each of its columns. This means that the locus of fixed points $\hat{F}$ of $\varphi_1^*$ in ${\mathbb P}^n_K$ consists of a finite collection of linear spaces, each one of dimension at most $n-2$. As $X_n^k(\Lambda)$ is not contained inside a hyperplane, it follows that the locus of fixed points of $\varphi_1^*$ in $X_n^k(\Lambda)$, $F=\hat{F}\cap X_n^k(\Lambda)$, must be a finite collection of components each one of dimension at most $d-2$, a contraction. It follows then that $H=H_0$ are required.

\item Let us now assume that either (i) $p=2$ or (ii) $p>2$ and $(d;k,n)\not\in\{(2;2,5),(2;4,3)\}$. By Corollary \ref{Corollary Kontogeorgis igualdad de automorfismos en GFV} we have that $\rm{Aut}(X_n^k(\Lambda))=\rm{Lin}(X_n^k(\Lambda))$. So, from the above,  $\rm{Aut}(X_n^k(\Lambda))$ has a unique generalized Fermat group $H$ of type $(d;k,n)$.

\end{enumerate}



\end{document}